\newtheorem{theorem}{Theorem}[section]
\newtheorem{proposition}[theorem]{Proposition}
\newtheorem{lemma}[theorem]{Lemma}
\newtheorem{corollary}[theorem]{Corollary}
\theoremstyle{definition}
\theoremstyle{remark}
\numberwithin{equation}{section}
\newcommand{\Z}{\mathbb{Z}}
\newcommand{\Q}{\mathbb{Q}}
\newcommand{\C}{\mathbb{C}}
\newcommand{\G}{\mathcal{G}}
\newcommand{\map}{\operatorname{Map}}
\newcommand{\odd}{{\operatorname{odd}}}
\newcommand{\rat}{_{\mathcal{R}}}
\title[The classifying spaces of $SO(n)$-gauge groups over $S^2$]{On the cohomology of the classifying spaces of $SO(n)$-gauge groups over $S^2$}
\author{Yuki Minowa}
\address{Department of Mathematics, Kyoto University, Kyoto, 606-8502, Japan}
\email{minowa.yuki.48z@st.kyoto-u.ac.jp}
\begin{document}

\begin{abstract}
  Let $\G_{\alpha}(X, G)$ be the $G$-gauge group over a space $X$ corresponding to a map $\alpha \colon X \to BG$. We compute the integral cohomology of $B\G_{1}(S^2, SO(n))$ for $n = 3,4$. We also show that the homology of $B\G_{1}(S^2, SO(n))$ is torsion free if and only if $n\le 4$. As an application, we classify the homotopy types of $SO(n)$-gauge groups over a Riemann surface for $n\le 4$.
\end{abstract}

\maketitle

%%%%% Section 1 %%%%%

\section[Introduction]{Introduction}
 \label{Intro}

Let $G$ be a topological group, and let $P$ be a principal $G$-bundle over a base space $X$. Then automorphisms of $P$ are, by definition, $G$-equivariant self-maps of $P$ covering the identity map of $X$. The \emph{gauge group} of $P$ is defined as the topological group of automorphisms of $P$.
Let $\G_{\alpha}(X, G)$ denote the gauge group of the principal $G$-bundle over $X$ corresponding to a map $\alpha \colon X \to BG$.

Gauge groups have been studied in several contexts of geometry, topology, and physics. Here, we briefly recall the homotopy theory of gauge groups. Given $G$ and $X$, we have a family of principal $G$-bundles over $X$, and so we get a family of their gauge groups. There are many previous studies about the classification of the homotopy types of gauge groups in this family. As shown by Gottlieb \cite{G72}, there is a natural homotopy equivalence
\begin{equation}
  \label{mapping space}
  B\G_{\alpha}(X, G) \simeq \map(X, BG; \alpha)
\end{equation}
where $\map(X,Y; f)$ denotes the path-component of the space of maps from $X$ to $Y$ containing a map $f\colon X \to Y$ (see also \cite{AB83}). Then the classification of the homotopy types of gauge groups is closely related to the classical problem in algebraic topology, the classification of components of a mapping space \cite{S10}. There are some general results on the classification of the homotopy types of gauge groups \cite{KameKT, KishiK10, KishiKT14, Th10A, Th19}. Remarkably, Crabb and Sutherland \cite{CrS00} showed that there are only finitely many homotopy types of $\G_{\alpha}(X, G)$ as $\alpha$ ranges over all maps $X \to BG$ whenever $G$ is a compact connected Lie group and $X$ is a connected finite complex, while there are possibly infinitely many principal $G$-bundles over $X$.

The classification of the homotopy types of gauge groups is generalized to the classification of the $A_n$-types: Tsutaya \cite{Tsut12} showed that there are only finitely many $A_n$-types for $n<\infty$ whenever $G$ is a compact connected Lie group and $X$ is a finite complex. On the other hand, Kishimoto and Tsutaya \cite{KishiTsuk16} showed that there are infinitely many $A_\infty$-types, i.e. the homotopy types of the classifying spaces, of gauge groups whenever $G$ is a compact connected simple Lie group and $X=S^d$ with $\pi_{d-1}(G)\otimes\Q\ne 0$. In addition, there are many results on the classification for specific $G$ and $X$ \cite{Cu18, HamK06, HamK07, HasKKS16, KamiKKKT07, KishiK19, KishiThT17, Ko91, Th10B, Th15}.

Although there are many results on the classification of the homotopy types of gauge groups, not much is known about their (co)homology. In particular, there are only a few results determining the (co)homology of the classifying spaces of gauge groups. Takeda \cite{Ta21} computed the integral cohomology ring of $B\G_{k}(S^2, U(n))$. Kishimoto and Theriault \cite{KishiTh22} proved that, for a compact connected simple Lie group $G$ of type $(n_1, \ldots, n_l)$ and a prime $p$ larger than $n_l+1$, there is an isomorphism
\[
H_*(B\G_{k}(S^4, G); \Z/p)
\cong H_*(BG; \Z/p) \otimes H_*(\Omega^3 G \langle 3 \rangle; \Z/p)
\]
as $\Z/p$-vector spaces and so the mod $p$ homology of $B\G_{k}(S^4, G)$ does not depend on the classifying map $k \in \Z \cong \pi_3(G)$. Partial results on the mod $p$ (co)homology of $B\G_{\alpha}(X, G)$ for specific $G$ and $X$ are given by Choi \cite{Ch06}, Masbaum \cite{M91} and Tsukuda \cite{Tsuk97}. As of today, Tsukuda's result \cite{Tsuk97} is the only nontrivial result on the integral or mod $p$ (co)homology of $B\G_{\alpha}(S^d, G)$ for a Lie group $G$ that has $p$-torsion in its homology.

In this paper, we study the cohomology of $B\G_k(S^2,SO(n))$ for $n\ge 3$, where $k=0,1\in\Z/2\cong\pi_1(SO(n))$. We will always assume $n\ge 3$ as long as we consider $SO(n)$. First, we will apply the result of Takeda \cite{Ta21} on the integral cohomology of $B\G_k(S^2,U(n))$ for determining the integral cohomology of $B\G_1(S^2,SO(n))$ for $n\le 4$, where the result of Takeda includes a minor mistake and we will correct it. To state the result, we set notation. Let $A$ be a graded abelian group of finite type. The \emph{rational closure} of a subgroup $B$ of $A$ is defined by
\[
  B_\mathcal{R}=\{a\in A\mid na\in B\text{ for some }0\ne n\in\Z\}.
\]
We also define a polynomial $s_n\in\Z[x_1,\ldots,x_n]$ inductively by
\begin{equation}
  \label{Newton}
  s_0=1,
  \quad s_1=x_1
  \quad \text{and}
  \quad s_n=(-1)nx_n+\sum_{i=1}^{n-1}(-1)^{n+i+1}x_{n-i}s_i.
\end{equation}
\noindent
So if $x_i$ is the $i$-th elementary symmetric function in $t_1,\ldots,t_n$, then for $k\le n$, we have
\[
  s_k=t_1^k+\cdots+t_n^k.
\]
Now we are ready to state the first result.

\begin{theorem}
  \label{main1}
  \begin{enumerate}
    \item There is an isomorphism
    \[
      H^*(B\G_1(S^2,SO(3));\Z)\cong\Z[c_1,c_2,x_1,x_2,\ldots]/((h_2,h_3,\ldots)\rat+(x_1))
    \]
    where $|c_i| = |x_i| = 2i$ and $h_i=s_i-s_{i-1}c_1+s_{i-2}c_2$.

    \item There is an isomorphism
    \[
      H^*(B\G_1(S^2,SO(4));\Z)\cong\Z[c_1,c_2,x_1,x_2,\ldots]\otimes\Z[c_1,c_2,x_1,x_2,\ldots]/I
    \]
    where $|c_i| = |x_i| = 2i$, $h_i=s_i-s_{i-1}c_1+s_{i-2}c_2$ and
    \[
      I=(h_2\otimes 1,1\otimes h_2,h_3\otimes 1,1\otimes h_3,\ldots)\rat+(c_1\otimes 1-1\otimes c_1,x_1\otimes 1).
    \]
  \end{enumerate}
\end{theorem}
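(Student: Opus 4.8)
The plan is to derive both rings from Takeda's computation of $H^*(B\G_k(S^2,U(n));\Z)$ by exploiting the low-rank isogenies $PU(2)\cong SO(3)$ and $SO(4)\cong S^3\times SO(3)$ (equivalently $Spin(4)\cong SU(2)\times SU(2)$). Using Gottlieb's equivalence \eqref{mapping space} I replace each classifying space of a gauge group by a path component of a mapping space out of $S^2$, and I exploit that $\map(S^2,-)$ sends a fibration of spaces to a fibration of mapping spaces. The central extension $U(1)\to U(2)\to PU(2)=SO(3)$ induces on $\pi_1$ the reduction $\Z\to\Z/2$, so the components indexed by odd $k\in\pi_2(BU(2))=\Z$ lie over the component $\alpha=1\in\pi_2(BSO(3))=\Z/2$; this is what lets Takeda's $U(2)$-answer detect the $SO(3)$-answer in the component we want.

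For part (1) the basic device is the evaluation fibration $\Omega S^3\simeq\Omega_1 SO(3)\to B\G_1(S^2,SO(3))\to BSO(3)$, together with its comparison to the $U(2)$ evaluation fibration via $U(2)\to PU(2)$. The fibre cohomology $H^*(\Omega S^3;\Z)=\Gamma[v]$ is the divided power algebra on $v$ with $|v|=2$, whose divided powers $\gamma_i(v)$ supply the infinitely many generators $x_i$, while the base contributes $c_2$ (essentially the Pontryagin class $p_1$); the remaining generator $c_1$ is inherited from the first Chern class on the $U(2)$ side. The one differential that matters is the transgression $d_3(v)=W_3$, the $2$-torsion integral Stiefel--Whitney class in $H^3(BSO(3);\Z)$: it kills the only odd-degree class and forces $c_1$, not $v$, to be the indivisible degree-two generator, which is exactly the statement $x_1=0$. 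Feeding Takeda's (corrected) presentation of $H^*(B\G_k(S^2,U(2));\Z)$ through the induced $\mathbb{CP}^\infty$-fibration $\mathbb{CP}^\infty\to B\G_k(S^2,U(2))\to B\G_1(S^2,SO(3))$ then pins down the multiplicative extensions and the precise relations, producing the ideal $(h_2,h_3,\dots)\rat+(x_1)$.

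For part (2) I run the same machine over $BSO(4)$, using the diffeomorphism $SO(4)\cong S^3\times SO(3)$, so that $\Omega_1 SO(4)\simeq\Omega S^3\times\Omega S^3$ contributes two commuting copies of $\Gamma[v]$. These account for the two tensor factors $\Z[c_1,c_2,x_1,\dots]$ and the two families of relations $h_i\otimes1$, $1\otimes h_i$. The key point is that there is still only one relevant transgression target, the single class $W_3\in H^3(BSO(4);\Z)$, which receives the transgressions of both bottom fibre classes $v_1,v_2$; comparing $E_\infty$-pages then forces precisely the two gluing relations in $I$, namely $c_1\otimes1-1\otimes c_1$ (identifying the two degree-two classes through the common $W_3$) and $x_1\otimes1$ (killing one of the two loop classes). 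The ring structure is again imported through the $U(2)\times U(2)$ comparison, i.e.\ two copies of the part (1) argument glued along this common transgression.

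The main obstacle throughout is integral, not rational. Rationally $\Omega S^3\simeq_{\Q}\mathbb{CP}^\infty$, every $W_3$-transgression vanishes, all spectral sequences collapse, and the answer degenerates to a polynomial ring on $c_1,c_2$ modulo the Newton-type relations $h_i$; the genuine difficulty is that the $2$-torsion transgressions must conspire with the divided-power structure so that the integral cohomology is nonetheless torsion free for $n\le4$, and one must determine exactly which integral classes become divisible. This divisibility is what the rational closure $(\cdot)\rat$ records: the crux is to prove that $(h_2,h_3,\dots)\rat$, and not the naive ideal $(h_2,h_3,\dots)$, is the correct integral relation ideal. Establishing this torsion-freeness and divisibility is where the comparison with Takeda's $U$-gauge-group computation is indispensable, and it is also the reason I must first correct the minor error in the statement of Takeda's theorem before feeding it into the fibre comparison.
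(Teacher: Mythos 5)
Your skeleton is in fact the same as the paper's: both arguments push Takeda's corrected computation of $H^*(B\G_k(S^2,U(2));\Z)$ (Theorem \ref{BU}) through the circle extension $S^1 \to U(2) \to PU(2) \cong SO(3)$ (the paper writes this as $Spin^c(3)\cong U(2)$, and uses $Spin^c(4)\subset U(2)\times U(2)$ for $n=4$), and both aim to exhibit $H^*(B\G_1(S^2,SO(n));\Z)$ as a quotient of the $U(2)$-side ring by a degree-two class. The problem is that at the two places where the actual work happens, your proposal asserts rather than proves. Your ``one differential that matters,'' $d_3(v)=W_3$ in the evaluation fibration $\Omega_1 SO(3)\to B\G_1(S^2,SO(3))\to BSO(3)$, is precisely the component-dependent fact that separates $k=1$ from $k=0$: for $k=0$ the evaluation fibration has a section (constant maps), so $d_3(v)=0$ and the conclusion fails --- indeed $H^3(B\G_0(S^2,SO(3));\Z)\cong\Z/2$, as computed in Section \ref{Sigmag}. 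The mechanism you offer for proving it, naturality along $U(2)\to PU(2)$ between the two evaluation fibrations, cannot work: the comparison lands in $H^3(BU(2);\Z)=0$, so naturality of the Serre spectral sequence says nothing whatsoever about the $2$-torsion class $W_3$. (Avoiding this dead end is exactly why the paper routes everything through $Spin^c$ and never computes a differential over $BSO(n)$ for $n\le 4$.)

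The second and more fundamental gap: even granting the transgression, your plan to ``feed Takeda's presentation through'' the fibration $\mathbb{CP}^\infty\to B\G_k(S^2,U(2))\to B\G_1(S^2,SO(3))$ requires knowing the restriction of $x_1$ to the fiber $BS^1$. The restrictions of $c_1,c_2$ are formal (they are pulled back from $BU(2)$, where $\alpha^*(c_1)=2c_1$, $\alpha^*(c_2)=c_1^2$), but $x_1$ is not pulled back from $BU(2)$, and nothing in your argument computes its image. This is the paper's Proposition \ref{alpha}, $(\alpha_*)^*(x_1)=kc_1$, proved by restricting the relations $h_i=0$ to the fiber and forcing the unknown coefficient to be divisible by every prime via the Girard--Newton formula; it is the one place where $k=1$ enters quantitatively, and it is what lets Leray--Hirsch run with $x_1$ itself as the fiber class, hence what produces the relation ideal $(x_1)$ --- rather than the ideal of some other class $ac_1+bx_1$ whose restriction happens to generate $H^2(BS^1;\Z)$ --- together with the rational closure, and, as a corollary rather than an input, the torsion-freeness for $n\le 4$ (Corollary \ref{le4}). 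Without this computation you cannot rule out, say, that the kernel of the projection to $H^*(B\G_1(S^2,SO(3));\Z)$ is generated by $c_1+x_1$, and the stated presentation does not follow; the same missing step recurs for $n=4$ in your relation $x_1\otimes 1$. As a side remark, your observation $SO(4)\cong S^3\times SO(3)$ would give a genuine shortcut for part (2), since the $k=1$ bundle is trivial on the $S^3$ factor and the gauge group splits accordingly; but that reduction still rests on part (1), where the gap above must be filled.
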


Next, as an application of Theorem \ref{main1}, we will generalize the result of Tsukuda \cite{Tsuk97}, in which he showed that the homology of the classifying space $B\G_1(S^2,SO(3))$ is torsion free.

\begin{theorem}
  \label{main2}
  The homology of $B\G_1(S^2,SO(n))$ is torsion free if and only if $n\le 4$.
\end{theorem}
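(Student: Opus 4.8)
The plan is to deduce the forward implication directly from Theorem \ref{main1} and to establish the converse by analysing the evaluation fibration and comparing rational with mod $2$ cohomology. For $n\le 4$ I would argue that the rings in Theorem \ref{main1} are free abelian groups. The key formal input is that $A/B\rat$ is torsion free for any subgroup $B$ of a graded abelian group $A$: if $m\bar a=0$ then $ma\in B\rat$, hence $kma\in B$ for some $k\ne 0$, so $a\in B\rat$. Taking $B=(h_2,h_3,\dots)$ and using that \eqref{Newton} gives $s_i=-i\,x_i+(\text{decomposables})$, each $h_i$ has leading term $-i\,x_i$; thus $x_2,x_3,\dots$ are successively eliminated and $A/B\rat$ is additively a polynomial ring (on $c_1,c_2,x_1$ in case (1), and analogously in case (2)). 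Quotienting further by $x_1$ (and by $c_1\otimes1-1\otimes c_1$ in case (2)) kills part of a regular sequence of polynomial generators and so preserves torsion-freeness; hence $H_*$ is torsion free for $n\le4$.

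For the converse I would use \eqref{mapping space} to identify $B\G_1(S^2,SO(n))$ with a component of $\map(S^2,BSO(n))$ and work with the evaluation fibration
\[
  \Omega_0 SO(n)\longrightarrow B\G_1(S^2,SO(n))\xrightarrow{\ e\ }BSO(n),
\]
whose fibre is the identity component of $\Omega^2BSO(n)\simeq\Omega SO(n)$. Since $BSO(n)$ is rationally a product of Eilenberg--MacLane spaces, $\map(S^2,BSO(n))$ is rationally a product and the rational Serre spectral sequence collapses, giving $H^*(B\G_1(S^2,SO(n));\Q)\cong H^*(BSO(n);\Q)\otimes H^*(\Omega_0SO(n);\Q)$. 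By the universal coefficient theorem it then suffices, for each $n\ge5$, to find a degree in which $\dim_{\Z/2}H^*(B\G_1(S^2,SO(n));\Z/2)$ exceeds the rational Betti number, since any such excess forces $2$-torsion.

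I would try to produce this excess by a Bockstein. As $w_1=0$, the Wu formula gives $\operatorname{Sq}^1w_4=w_5$, and $w_5\ne0$ in $H^*(BSO(n);\Z/2)$ exactly when $n\ge5$; this is the source of the threshold. If $e^*w_4$ and $e^*w_5$ are nonzero in $H^*(B\G_1(S^2,SO(n));\Z/2)$, then $e^*w_4$ carries a nontrivial $\operatorname{Sq}^1$, so its integral Bockstein is a nonzero $2$-torsion class and the homology is not torsion free. For $n\le4$ this is vacuous, since $w_5=0$ (and $w_4=0$ when $n=3$), in agreement with the first part.

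The hard part will be exactly this survival, i.e.\ controlling the spectral sequence of the evaluation fibration for the \emph{index one} component. The differentials depend on $\alpha$: for the trivial bundle the constant sections split off $H^*(BSO(n))$, whereas for the nontrivial bundle the fibre classes transgress and annihilate $2$-torsion of the base --- all of it when $n\le4$, which is precisely what makes those cases torsion free. I therefore expect the principal work to be pinning down these transgressions (through naturality along $SO(3)\subset SO(n)$ and the Kudo transgression theorem) and deducing that $e^*w_4$ and $e^*w_5$ are \emph{not} hit, so that the $2$-torsion detected by $\operatorname{Sq}^1$ genuinely survives. Since the inclusions $SO(n)\subset SO(n+1)$ induce highly connected maps of fibrations on both base and fibre, I would first attempt to reduce the general case $n\ge5$ to $n=5$; should the relevant classes lie above the resulting stability range, one instead runs the spectral sequence uniformly in $n$, which I regard as the main obstacle.
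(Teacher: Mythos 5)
Your proposal has genuine gaps in both directions. For $n\le 4$, the purely algebraic deduction from Theorem \ref{main1} does not work as written. Your lemma that $A/B\rat$ is torsion free is correct, but the further quotient by the ideal $(x_1)$ is taken with respect to the \emph{ring} structure of $A/B\rat$, and that ring is not the polynomial ring $\Z[c_1,c_2,x_1]$ --- it is only additively isomorphic to it. Killing a regular polynomial generator of a torsion-free ring can create torsion: for instance $R=\Z[x,y]/(x^2-2y)$ (with $|x|=2$, $|y|=4$) is torsion free and $x$ is regular, yet $R/(x)\cong\Z[y]/(2y)$ has $2$-torsion. The rings at hand have exactly this flavour, because $(h_2,h_3,\ldots)\rat$ strictly contains $(h_2,h_3,\ldots)$; e.g.\ the class $y$ with $2y=h_4-c_1h_3-(c_1x_1+x_1^2)h_2$ exhibited in Section \ref{Spinc} is such a divided element, and it is precisely such elements that can become torsion after setting $x_1=0$. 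What you actually need is that $(h_2,h_3,\ldots)\rat+(x_1)$ is itself rationally closed, and the paper obtains this topologically rather than algebraically (Corollary \ref{le4}): by the Leray--Hirsch splitting for $BS^1\to B\G_1(S^2,Spin^c(n))\to B\G_1(S^2,SO(n))$, the group $H^*(B\G_1(S^2,SO(n));\Z)$ is a direct summand of $H^*(B\G_1(S^2,Spin^c(n));\Z)$, which is free abelian; torsion-freeness follows. Some such geometric input seems unavoidable here.

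For $n\ge 5$, your outline is a strategy rather than a proof: everything hinges on the survival claim $e^*w_5\ne 0$, i.e.\ that $w_5\otimes 1$ is not hit by $d_3$ from $E^{2,2}$ or $d_5$ from $E^{0,4}$, and you explicitly leave this open; settling it would require knowing $H^4(\Omega_0SO(n);\Z/2)$ and the $d_5$ differential, which is substantial. The paper stays in the lowest possible degree instead: the fibre class $z_1\in H^2(\Omega_0SO(n);\Z/2)$ is shown to be a permanent cycle by a naturality trick --- restricting along $SO(3)\subset SO(n)$ kills $z_1$ (Lemma \ref{dim2}) while it is injective on the only possible target $E_3^{3,0}=\langle w_3\otimes 1\rangle$, so $d_3(1\otimes z_1)=0$. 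This gives $H^2(B\G_1(S^2,SO(n));\Z/2)\cong(\Z/2)^2$ (Lemma \ref{ge5}), whereas the integral spectral sequence gives $H^2(B\G_1(S^2,SO(n));\Z)\cong\Z$ (Lemma \ref{H2Z}); the universal coefficient theorem then forces $\mathrm{Tor}(H^3(B\G_1(S^2,SO(n));\Z),\Z/2)\cong\Z/2$, hence $2$-torsion in homology. Note that this is your Bockstein idea one degree lower: the same survival fact $d_3(1\otimes z_1)=0$ shows $e^*w_3\ne 0$, and $\mathrm{Sq}^1 w_2=w_3$ plays the role you assigned to $\mathrm{Sq}^1 w_4=w_5$. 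The threshold $n\ge 5$ then enters not through the vanishing of a Stiefel--Whitney class but through the disappearance of the extra fibre class $y_1$, which for $n\le 4$ is available to transgress onto $w_3$. If you want to salvage your outline, replace $w_4,w_5$ by $w_2,w_3$ and prove the survival of $w_3\otimes 1$ by the restriction-to-$SO(3)$ argument above.
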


Finally, as another application of Theorem \ref{main1}, we will compute $\pi_1(\G_k(S^2,SO(n)))$ for $n\le 4$. Then by combining with Theriault's homotopy decomposition of a gauge group over a Riemann surface \cite{Th10A}, we can determine $\pi_1(\G_k(\Sigma_g,SO(n)))$ for $n\le 4$ as follows, where $\Sigma_g$ denotes a Riemann surface of genus $g$. Note that the homotopy set $[\Sigma_g,BSO(n)]$ is isomorphic to $\Z/2$.

\begin{theorem}
  \label{main3}
  For $n\le 4$, there are isomorphisms
  \[
  \pi_1(\G_k(\Sigma_g,SO(n)))
  \cong
  \begin{cases}
    \Z^{n-2} \oplus \Z/2 &(k = 0) \\
    \Z^{n-2} &(k = 1).
  \end{cases}
  \]
\end{theorem}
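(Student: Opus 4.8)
The plan is to reduce everything to the case $X=S^2$ and then split into the two bundle types $k=0,1$. For the reduction I would invoke Theriault's homotopy decomposition \cite{Th10A}, which gives a homotopy equivalence of the form $\G_k(\Sigma_g,SO(n))\simeq\G_k(S^2,SO(n))\times\prod_{i=1}^{2g}\Omega SO(n)$. Since $\pi_1(\Omega SO(n))=\pi_2(SO(n))=0$, each of the $2g$ loop-space factors contributes nothing to $\pi_1$, so $\pi_1(\G_k(\Sigma_g,SO(n)))\cong\pi_1(\G_k(S^2,SO(n)))$; in particular the answer is independent of the genus $g$. It therefore suffices to compute $\pi_1(\G_k(S^2,SO(n)))$ for $n\le 4$. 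Throughout I use $\pi_1(\G_k)\cong\pi_2(B\G_k)$ together with $B\G_k(S^2,SO(n))\simeq\map(S^2,BSO(n);k)$ from \eqref{mapping space}, and the standard values $\pi_3(SO(3))\cong\Z$ and $\pi_3(SO(4))\cong\Z^2$, i.e. $\pi_3(SO(n))\cong\Z^{n-2}$ for $n\le 4$.

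For $k=1$ I would extract $\pi_1$ from the cohomology already computed in Theorem \ref{main1}. First, applying the evaluation fibration $\Omega_0 SO(n)\to\map(S^2,BSO(n);1)\to BSO(n)$ and using $\pi_1(BSO(n))=0$ and $\pi_1(\Omega_0 SO(n))=\pi_2(SO(n))=0$, the long exact sequence shows $\pi_1(B\G_1(S^2,SO(n)))=\pi_0(\G_1(S^2,SO(n)))=0$, so $B\G_1(S^2,SO(n))$ is simply connected. Hurewicz then gives $\pi_1(\G_1(S^2,SO(n)))\cong\pi_2(B\G_1(S^2,SO(n)))\cong H_2(B\G_1(S^2,SO(n));\Z)$. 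Reading off the degree-$2$ part of the rings in Theorem \ref{main1}, the relations kill $x_1$ (and identify the two copies of $c_1$ in the $n=4$ case) but impose nothing in degree $2$ coming from the $h_i$, whose degrees are at least $4$; hence $H^2\cong\Z$ for $n=3$ and $H^2\cong\Z^2$ for $n=4$. By Theorem \ref{main2} the homology is torsion free for $n\le 4$, so the universal coefficient theorem gives $H_2\cong\Z^{n-2}$, whence $\pi_1(\G_1(S^2,SO(n)))\cong\Z^{n-2}$.

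For $k=0$ the bundle is trivial, so $\G_0(S^2,SO(n))$ is the topological group $\map(S^2,SO(n))$ of pointwise products. Here the evaluation-at-a-basepoint fibration $\Omega^2 SO(n)\to\map(S^2,SO(n))\to SO(n)$ admits the section sending $g\in SO(n)$ to the constant map at $g$. Because $\pi_1$ of a topological group is abelian, the resulting split short exact sequence $0\to\pi_1(\Omega^2 SO(n))\to\pi_1(\G_0(S^2,SO(n)))\to\pi_1(SO(n))\to 0$ realizes $\pi_1$ as a direct sum. Since $\pi_1(\Omega^2 SO(n))=\pi_3(SO(n))\cong\Z^{n-2}$ and $\pi_1(SO(n))\cong\Z/2$, I would conclude $\pi_1(\G_0(S^2,SO(n)))\cong\Z^{n-2}\oplus\Z/2$.

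I expect the $k=1$ case to be the main obstacle, and it is exactly what forces the appeal to Theorem \ref{main1}. The same evaluation fibration as above yields only a (generally non-split) extension $0\to\pi_3(SO(n))\to\pi_1(\G_1(S^2,SO(n)))\to\Z/2\to 0$: unlike for $k=0$, constant maps lie in the $k=0$ component of $\map(S^2,BSO(n))$, so there is no section over the nontrivial component and elementary fibration arguments cannot decide the extension. The content of the computation is precisely that this extension is non-split, so that the $\Z/2$ is absorbed and $\pi_1(\G_1(S^2,SO(n)))\cong\Z^{n-2}$ rather than $\Z^{n-2}\oplus\Z/2$; this is what the integral cohomology ring of Theorem \ref{main1} pins down via Hurewicz. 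A secondary point requiring care is the bookkeeping of the component index $k$ under Theriault's decomposition, together with the verification that $B\G_1(S^2,SO(n))$ is simply connected so that Hurewicz applies.
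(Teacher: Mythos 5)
Your proof is correct. The genus reduction via Theriault's decomposition \cite{Th10A} and your $k=1$ argument (simple connectivity of $B\G_1$, Hurewicz, reading $H^2$ off Theorem \ref{main1}, then the universal coefficient theorem using torsion-freeness) are essentially identical to the paper's. Where you genuinely diverge is the $k=0$ case. The paper stays on the classifying-space level: it runs the integral Serre spectral sequence of the fibration $BS^1 \xrightarrow{\alpha_*} B\G_0(S^2,Spin^c(n)) \xrightarrow{\pi_*} B\G_0(S^2,SO(n))$ from Lemma \ref{diag2}, feeding in Proposition \ref{alpha} with $k=0$ (so $(\alpha_*)^*(x_1)=0$) and Lemma \ref{alphac} (so $(\alpha_*)^*(c_1)=2c_1$), to conclude that $H^2(B\G_0(S^2,SO(n));\Z)\cong\Z^{n-2}$ and that $H^3(B\G_0(S^2,SO(n));\Z)\cong\Z/2$ is generated by an order-two class $w$ with $d_3(1\otimes c_1)=w\otimes 1$; the universal coefficient theorem then yields $H_2\cong\Z^{n-2}\oplus\Z/2$. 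You instead exploit the fact that the $k=0$ bundle is trivial, so that $\G_0(S^2,SO(n))\cong\map(S^2,SO(n))$ as a topological group, the constant maps give a section of the evaluation fibration, and abelianness of $\pi_1$ of a topological group converts the resulting split extension into the direct sum $\pi_3(SO(n))\oplus\pi_1(SO(n))\cong\Z^{n-2}\oplus\Z/2$. Your route is more elementary --- no spectral sequence and no $Spin^c$ comparison are needed for $k=0$ --- and it makes transparent why the extra $\Z/2$ appears exactly for the trivial bundle; the paper's route has the side benefit of computing $H^2$ and $H^3$ of $B\G_0(S^2,SO(n))$ outright. Your closing observation, that for $k=1$ the extension $0\to\pi_3(SO(n))\to\pi_1(\G_1(S^2,SO(n)))\to\Z/2\to 0$ must be non-split (as Theorem \ref{main1} forces $\pi_1\cong\Z^{n-2}$, which is torsion free), correctly pinpoints why no such elementary argument can settle the nontrivial component. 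One small point of hygiene: left exactness of your sequence $0\to\pi_1(\Omega^2 SO(n))\to\pi_1(\map(S^2,SO(n)))$ requires the connecting map out of $\pi_2(SO(n))$ to vanish, which holds either because $\pi_2$ of a compact Lie group is zero or because the section kills all connecting maps; this deserves a word in a final write-up.
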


Then we get the following immediate corollary.

\begin{corollary}
  For $n\le 4$, $\G_k(\Sigma_g,SO(n))\simeq\G_l(\Sigma_g,SO(n))$ if and only if $k=l$.
\end{corollary}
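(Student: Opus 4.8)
The plan is to reduce the statement to a comparison of fundamental groups and then apply Theorem~\ref{main3} directly. First I would record the trivial direction: since $[\Sigma_g, BSO(n)] \cong \Z/2$, the index $k$ takes values only in $\{0,1\}$, and if $k = l$ then $\G_k(\Sigma_g, SO(n))$ and $\G_l(\Sigma_g, SO(n))$ are literally the same gauge group, hence homotopy equivalent. So the entire content lies in the converse: a homotopy equivalence $\G_k(\Sigma_g, SO(n)) \simeq \G_l(\Sigma_g, SO(n))$ forces $k = l$.

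For the converse, the key observation is that $\pi_1$ is a homotopy invariant, so any homotopy equivalence induces an isomorphism of fundamental groups. It therefore suffices to show that the two possible fundamental groups are non-isomorphic, i.e. that $\G_0(\Sigma_g, SO(n)) \not\simeq \G_1(\Sigma_g, SO(n))$. This is immediate from Theorem~\ref{main3}: for $n \le 4$ we have
\[
  \pi_1(\G_0(\Sigma_g, SO(n))) \cong \Z^{n-2} \oplus \Z/2,
  \qquad
  \pi_1(\G_1(\Sigma_g, SO(n))) \cong \Z^{n-2}.
\]
The first group has nontrivial torsion subgroup $\Z/2$, while the second is free abelian and hence torsion free. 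Since the torsion subgroup is an isomorphism invariant of an abelian group, these two groups cannot be isomorphic, and therefore the gauge groups cannot be homotopy equivalent.

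Combining the two directions, for $n \le 4$ the relation $\G_k(\Sigma_g, SO(n)) \simeq \G_l(\Sigma_g, SO(n))$ holds if and only if $k = l$. I do not expect any real obstacle here, as the corollary is a formal consequence of Theorem~\ref{main3}; the only point requiring a word of care is that $k$ and $l$ a priori range over the finite set $\Z/2$, so that distinguishing the two cases $k = 0$ and $k = 1$ by the presence or absence of $2$-torsion in $\pi_1$ exhausts all possibilities and completes the argument.
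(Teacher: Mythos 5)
Your proof is correct and is exactly the argument the paper has in mind: the corollary is stated as an immediate consequence of Theorem~\ref{main3}, with the two possible fundamental groups $\Z^{n-2}\oplus\Z/2$ and $\Z^{n-2}$ distinguished by their torsion. Your write-up simply makes explicit the homotopy invariance of $\pi_1$ and the fact that $k,l$ range over $\Z/2$, which is what the paper leaves unsaid.
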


This paper is structured as follows. Section \ref{Spinc} computes the integral cohomology of $B\G_k(S^2,Spin^c(n))$ for $n=3,4$ by correcting the result of Takeda \cite{Ta21} on the integral cohomology of $B\G_k(S^2,U(n))$. Section \ref{Coh} computes the integral cohomology ring of $B\G_1(S^2,SO(n))$ for $n=3, 4$ and proves Theorem \ref{main1}. Section \ref{Tor} computes the mod $2$ cohomology of $B\G_1(S^2,SO(n))$ for $n \ge 5$ and gives a proof of Theorem \ref{main2}. Finally, Section \ref{Sigmag} proves Theorem \ref{main3} as an application of the proof of Theorem \ref{main1}.

%%%%% Section 2 %%%%%

\section[Cohomology]{Cohomology of $B\G_k(S^2,Spin^c(n))$ for $n=3,4$}
  \label{Spinc}

This section computes the integral cohomology of $B\G_k(S^2,Spin^c(n))$ for $n=3,4$ by correcting the result of Takeda \cite{Ta21} on the integral cohomology of $B\G_k(S^2,U(n))$. Recall that $Spin^c(n)$ is defined as the nontrivial extension of $SO(n)$ by $S^1$:
\[
1 \to S^1 \xrightarrow{\alpha} Spin^c(n) \xrightarrow{\pi} SO(n) \to 1.
\]
This short exact sequence of Lie groups is natural with respect to the inclusions $i'_n\colon Spin^c(n) \to Spin^c(n+1)$ and $i_n\colon SO(n) \to SO(n+1)$. Then, in particular, there are exceptional isomorphisms:
\[
  Spin^c(3)\cong U(2)\quad\text{and}\quad
  Spin^c(4)\cong
  \left\{
  (A,B) \in U(2) \times U(2) \mid \det A = \det B
  \right\}
\]
where $\alpha \colon S^1 \to Spin^c(3)$ and $i'_3 \colon Spin^c(3) \to Spin^c(4)$ are given by
\[\alpha(e^{i\theta}) =
\begin{pmatrix}
  e^{i\theta} & 0 \\
  0 & e^{i\theta}
\end{pmatrix},
\quad
i'_3(A) = (A,A).
\]
So we can get the cohomology of $B\G_k(S^2,Spin^c(n))$ for $n=3,4$ from that of $B\G_k(S^2,U(2))$.

Takeda \cite{Ta21} described the integral cohomology of $B\G_k(S^2,U(n))$, but it includes a mistake. Then we correct it. Bott \cite{B58} studied the homology of the loop space of a Lie group, and in particular, we have

\begin{proposition}
  \label{Bott}
  There is an isomorphism
  \[
    H^*(\Omega_0 U(n);\Z) \cong \Z[x_1, x_2,\ldots]
    /(s_n,s_{n+1},\ldots)\rat,\quad |x_i|=2i
  \]
  where $s_n$ is as in \eqref{Newton}.
\end{proposition}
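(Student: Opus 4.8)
The plan is to reduce the statement to a computation of $H^*(\Omega SU(n);\Z)$ and to realize the answer as a quotient of $H^*(BU;\Z)$ through Bott periodicity. First I would record the reduction $\Omega_0 U(n)\simeq\Omega SU(n)$: the determinant fibration $SU(n)\to U(n)\xrightarrow{\det}S^1$ admits a section $\lambda\mapsto\mathrm{diag}(\lambda,1,\dots,1)$, so looping splits $\Omega U(n)\simeq\Omega SU(n)\times\Omega S^1\simeq\Omega SU(n)\times\Z$, and since $\Omega SU(n)$ is connected the basepoint component is $\Omega_0 U(n)\simeq\Omega SU(n)$. By Bott periodicity $\Omega SU\simeq BU$, whence $H^*(\Omega SU;\Z)=\Z[x_1,x_2,\dots]$ with $x_i$ the universal Chern classes, $|x_i|=2i$. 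Looping the inclusion $SU(n)\hookrightarrow SU$ yields a ring homomorphism $\rho\colon\Z[x_1,x_2,\dots]\to H^*(\Omega SU(n);\Z)$, and the goal is to show $\rho$ is surjective with kernel $(s_n,s_{n+1},\dots)\rat$.

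Next I would establish torsion-freeness and the additive size of the target by induction on $n$, using the bundle $SU(n)\to SU(n+1)\to S^{2n+1}$, which loops to a fibration $\Omega SU(n)\to\Omega SU(n+1)\to\Omega S^{2n+1}$ with simply connected base. Since $H^*(\Omega S^{2m+1};\Z)$ is a free divided power algebra concentrated in even degrees, the $E_2$-page of the Serre spectral sequence sits in even total degree and the sequence collapses; inductively this shows $H^*(\Omega SU(n);\Z)$ is torsion-free and rationally equal to $\Q[s_1,\dots,s_{n-1}]$, a polynomial algebra on generators in degrees $2,4,\dots,2(n-1)$ (matching the rational homotopy type $\prod_{i=1}^{n-1}\Omega S^{2i+1}$). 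In particular $H^*(\Omega SU(n);\Z)$ embeds into its rationalization.

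I would then pin down the ideal rationally and transfer it back. Looping the fibration $SU(n)\to SU\to SU/SU(n)$ gives a fibration sequence $\Omega SU(n)\to BU\to\Omega(SU/SU(n))$, where $H^*(SU/SU(n);\Q)$ is exterior on classes in degrees $2n+1,2n+3,\dots$, so $H^*(\Omega(SU/SU(n));\Q)$ is polynomial on primitive generators in degrees $2n,2n+2,\dots$. As the relevant maps are loop maps, they carry primitives to primitives; the primitives of $H^*(BU;\Q)$ are spanned by the Newton classes, so these generators map to $s_n,s_{n+1},\dots$ up to nonzero scalars, and $\rho\otimes\Q$ is exactly the projection $\Q[s_1,s_2,\dots]\to\Q[s_1,\dots,s_{n-1}]$ with kernel $(s_n,s_{n+1},\dots)_\Q$. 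Because the target is torsion-free, $\rho(a)=0$ iff $(\rho\otimes\Q)(a)=0$, so $\ker\rho=(s_n,s_{n+1},\dots)_\Q\cap\Z[x_1,x_2,\dots]$, and clearing denominators identifies this intersection with the rational closure $(s_n,s_{n+1},\dots)\rat$; combined with surjectivity of $\rho$ (treated next) this yields the claimed isomorphism.

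The hard part will be the two integral statements that the rational picture cannot detect: that $\rho$ is surjective over $\Z$, and that the transgressed classes are the power sums on the nose. Surjectivity is genuinely delicate, since $H^*(\Omega SU(n);\Z)$ is a divided power algebra and is \emph{not} generated as a ring by its part of degree $\le 2(n-1)$. The resolution is that $\Z[x_1,x_2,\dots]$ already contains the complete homogeneous symmetric functions $h_k$, and one checks via Newton's identities that $h_k$ maps to the divided power $\gamma_k$ of a generator while $s_k\mapsto 0$ for $k\ge n$, so the image of $\rho$ realizes the full divided power structure. I expect to secure both points either by this explicit symmetric-function computation or, more economically, by invoking Bott's Morse-theoretic description in \cite{B58}, whose generating variety $\C P^{n-1}\to\Omega SU(n)$ exhibits $H_*(\Omega SU(n);\Z)$ as the subring generated by the image of $H_*(\C P^{n-1};\Z)$ and dually delivers both the surjectivity and the power-sum relations.
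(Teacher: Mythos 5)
The paper never proves Proposition \ref{Bott}: it is stated as a direct quotation of Bott \cite{B58}, so the comparison here is between your reconstruction and a bare citation. Your outline is essentially correct, and it is in the same spirit as the argument the paper \emph{does} write out later for Theorem \ref{BU}. The reduction $\Omega_0 U(n)\simeq\Omega SU(n)$ via the determinant section is right; the induction over $\Omega SU(n)\to\Omega SU(n+1)\to\Omega S^{2n+1}$ does give evenness and torsion-freeness of $H^*(\Omega SU(n);\Z)$; the identification of $\ker(\rho\otimes\Q)$ with $(s_n,s_{n+1},\dots)_\Q$ via primitives of $H^*(BU;\Q)$ is correct (with one small gap: the assertion that the degree-$2k$ generators of $H^*(\Omega(SU/SU(n));\Q)$ pull back to \emph{nonzero} multiples of $s_k$ needs injectivity of the projection's map on rational cohomology, which you can get from the collapse of the rational Serre spectral sequence); and torsion-freeness of the target then legitimately converts the rational kernel into $(s_n,\dots)_\Q\cap\Z[x_1,x_2,\dots]=(s_n,s_{n+1},\dots)\rat$. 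What your route buys over the paper's citation is an actual derivation of this exact presentation, rational closure included, which takes real translation from Bott's homology-level statement. Note also that once the kernel is identified this way, the relations $s_k\mapsto 0$ for $k\ge n$ are automatic; the only integral input still missing is surjectivity of $\rho$.

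That brings up the one genuine soft spot: your first proposed endgame for surjectivity. For $n>2$ the ring $H^*(\Omega SU(n);\Z)$ is the graded dual of the polynomial Hopf algebra $\Z[b_1,\dots,b_{n-1}]$ and is \emph{not} a divided power algebra on a single generator, so ``$h_k$ maps to the divided power $\gamma_k$ of a generator'' does not make sense as stated; moreover $s_k\mapsto 0$ is geometric information about $\rho$ and cannot be extracted from Newton's identities alone. Your second endgame is the one that works: Bott's generating-variety theorem gives $H_*(\Omega SU(n);\Z)\cong\Z[b_1,\dots,b_{n-1}]$ with $b_i$ the images of the cells of $\C P^{n-1}$, compatibly with stabilization, so the map $H_*(\Omega SU(n);\Z)\to H_*(BU;\Z)\cong\Z[b_1,b_2,\dots]$ is the inclusion of a polynomial subring; its cokernel is free abelian, so dualizing yields surjectivity of $\rho$. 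Since that theorem is precisely the content of the paper's citation, your proposal becomes a complete and correct proof once the symmetric-function endgame is deleted and the generating-variety argument is used in its place.
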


\noindent
By \eqref{mapping space}, we have $B\G_k(S^2,U(n)) \simeq \map(S^2,BU(n); k)$. Then there is a homotopy fibration
\begin{equation}
  \label{fibration U(n)}
  \Omega_0U(n)\xrightarrow{\iota}B\G_k(S^2,U(n))\xrightarrow{e}BU(n)
\end{equation}
given by the evaluation at the basepoint of $S^2$. Recall that the cohomology of $BU(n)$ is given by
\[
  H^*(BU(n);\Z) \cong \Z[c_1,\ldots,c_n],\quad|c_i|=2i
\]
where $c_i$ is the universal $i$-th Chern class. Then
\[
H^{\odd}(BU(n);\Z) = 0 \quad \text{and} \quad H^{\odd}(\Omega_0 U(n);\Z) = 0
\]
by Proposition \ref{Bott}. Thus the integral cohomology Serre spectral sequence for the above homotopy fibration collapses at the $E_2$-term and we get

\begin{lemma}
  \label{U(n) module}
  The integral cohomology of $B\G_k(S^2,U(n))$ is a free abelian group and generated by $c_i=e^*(c_i)$ for $i=1,\ldots,n$ and $x_j$ for $j\ge 1$ as an algebra, where $\iota^*(x_j)=x_j$ in Proposition \ref{Bott}.
\end{lemma}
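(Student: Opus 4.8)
The plan is to read off the two assertions of the lemma from the collapse of the Serre spectral sequence for the fibration \eqref{fibration U(n)}, which has already been established. Since $U(n)$ is connected, $BU(n)$ is simply connected and the local coefficient system is trivial; moreover both $H^*(BU(n);\Z)$ and $H^*(\Omega_0 U(n);\Z)$ are free abelian and concentrated in even degrees. Hence
\[
  E_2^{p,q} \cong H^p(BU(n);\Z) \otimes H^q(\Omega_0 U(n);\Z),
\]
with no Tor summands and with $E_2^{p,q}=0$ unless $p$ and $q$ are both even. As the differentials raise total degree by one, they vanish for parity reasons, so $E_\infty^{p,q}=E_2^{p,q}$ is free abelian in each bidegree.

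To deduce that $H^*(B\G_k(S^2,U(n));\Z)$ is free abelian, I would use that the Serre filtration on each $H^N(B\G_k(S^2,U(n));\Z)$ is finite and that its successive quotients $E_\infty^{p,N-p}$ are free abelian. Since an extension of a free abelian group by a free abelian group splits, an induction on the length of the filtration shows that each $H^N(B\G_k(S^2,U(n));\Z)$ is free abelian.

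For the algebra statement I would combine the two edge homomorphisms with the multiplicative structure of the spectral sequence. The base edge map $e^*\colon H^*(BU(n);\Z)\to H^*(B\G_k(S^2,U(n));\Z)$ supplies the classes $c_i=e^*(c_i)$ detecting the bottom row $E_\infty^{*,0}$, while the fiber edge map $\iota^*\colon H^*(B\G_k(S^2,U(n));\Z)\to H^*(\Omega_0 U(n);\Z)$ is surjective onto the edge $E_\infty^{0,*}$, so each polynomial generator $x_j$ from Proposition \ref{Bott} lifts to a class $x_j$ with $\iota^*(x_j)=x_j$. Because $E_\infty^{*,*}\cong H^*(BU(n);\Z)\otimes H^*(\Omega_0 U(n);\Z)$ is generated as an algebra by the images of the $c_i$ and the $x_j$, I would conclude as follows: any class lies in some filtration stage $F^p$ and agrees, modulo $F^{p+1}$, with a polynomial in the $c_i$ and $x_j$; subtracting that polynomial pushes the class into $F^{p+1}$, and the finiteness of the filtration in each degree closes the induction.

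The main obstacle is this final step, the passage from the associated graded algebra to the honest ring. Products of the chosen lifts $x_j$ represent the corresponding products in $E_\infty^{*,*}$ only up to terms of strictly higher filtration, so one cannot simply transport the tensor-product algebra structure to $H^*(B\G_k(S^2,U(n));\Z)$; one must instead run the filtration induction carefully to see that the $c_i$ and $x_j$ still generate. I emphasize that no ring relations among these generators are claimed at this stage; those are the content of the subsequent computation.
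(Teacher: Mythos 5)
Your proposal is correct and follows exactly the paper's argument: the paper also deduces the lemma from the collapse of the integral Serre spectral sequence for the evaluation fibration, forced by the evenness of $H^*(BU(n);\Z)$ and $H^*(\Omega_0 U(n);\Z)$. The paper states the conclusion immediately after observing the collapse, while you have spelled out the implicit details (splitting of the filtration extensions to get freeness, and the filtration induction showing the lifts $c_i$, $x_j$ generate the ring), all of which are standard and correctly handled.
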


\noindent
Let $c_0 = 1$ and $c_i = 0$ for $i > n$. We define
\[
  h_i=kc_i+\sum_{j=1}^i(-1)^js_jc_{i-j} \quad \text{for} \quad i \geq 1.
\]
Takeda \cite{Ta21} showed that $h_i=0$ in $H^*(B\G_k(S^2,U(n));\Z)$ for $i\ge n$ by constructing the element $x_j \in H^{2j}(B\G_k(S^2,U(n));\Z)$ explicitly. Thus by Lemma \ref{U(n) module}, there is a surjection
\[
  \Z[c_1,\ldots,c_n,x_1,x_2,\ldots]/(h_n,h_{n+1},\ldots)\to H^*(B\G_k(S^2,U(n));\Z).
\]
Takeda \cite{Ta21} claimed that the surjection above is an isomorphism, which is false. To see this, let $n = 2$ and $k = 1$. Then
\[
h_4 - c_1 h_3 - (c_1 x_1 + x_1^2) h_2 = 2y
\]
where
\[
y = c_1^2 x_2 + c_1 x_1 x_2 - x_1^2 x_2 - c_2 x_2 + x_2^2 + 2x_1 x_3 - 2x_4.
\]
However, since the coefficient of $x_4$ in $f \in (h_2,h_3,\ldots)$ is a multiple of 4, we have $y \notin (h_2,h_3,\ldots)$. Thus the image of $y$ in $\Z[c_1,c_2,x_1,x_2,\ldots]/(h_2,h_3,\ldots)$ is of order $2$, while by Lemma \ref{U(n) module},  $H^*(B\G_1(S^2,U(2));\Z)$ is a free abelian group.

Note that, for a free abelian group $A$ and its subgroup $B$, $B\rat$ is the minimal direct summand of $A$ containing $B$. Then $\Z[c_1,\ldots,c_n,x_1,x_2,\ldots]/(h_n,h_{n+1},\ldots)\rat$ is the maximal free abelian subgroup of $\Z[c_1,\ldots,c_n,x_1,x_2,\ldots]/(h_n,h_{n+1},\ldots)$. Thus by Lemma \ref{U(n) module}, we have a surjection
\begin{equation}
  \label{projection}
  \Z[c_1,\ldots,c_n,x_1,x_2,\ldots]/(h_n,h_{n+1},\ldots)\rat\to H^*(B\G_k(S^2,U(n)); \Z).
\end{equation}

\begin{theorem}
 \label{BU}
  There is an isomorphism
  \[
    H^*(B\G_k(S^2,U(n));\Z)\cong\Z[c_1,\ldots,c_n,x_1,x_2,\ldots]/(h_n,h_{n+1},\ldots)\rat.
  \]
\end{theorem}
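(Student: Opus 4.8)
The plan is to show that the surjection \eqref{projection} is an isomorphism by a Poincaré series count. The source is a free abelian group, as explained in the paragraph preceding \eqref{projection}, and the target is free abelian by Lemma \ref{U(n) module}. Since \eqref{projection} is surjective, to conclude it is an isomorphism it suffices to prove its kernel vanishes; as the kernel is a subgroup of the free abelian source it is torsion-free, so it is enough to show that \eqref{projection} is a rational isomorphism, which I will do by comparing rational Poincaré series degree by degree. Because $(h_n,h_{n+1},\ldots)\rat/(h_n,h_{n+1},\ldots)$ is torsion, tensoring the source with $\Q$ yields $\Q[c_1,\ldots,c_n,x_1,x_2,\ldots]/(h_n,h_{n+1},\ldots)$.

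For the target, the hypotheses $H^{\odd}(BU(n);\Z)=0$ and $H^{\odd}(\Omega_0U(n);\Z)=0$ force the Serre spectral sequence of \eqref{fibration U(n)} to collapse, so the rational Poincaré series of $H^*(B\G_k(S^2,U(n));\Q)$ is the product of those of $H^*(BU(n);\Q)=\Q[c_1,\ldots,c_n]$ and $H^*(\Omega_0U(n);\Q)$. To handle the latter I would first switch, over $\Q$, from the variables $x_j$ to the Newton polynomials $s_j$: the recursion \eqref{Newton} shows that $s_j$ equals $-j\,x_j$ plus a polynomial in $x_1,\ldots,x_{j-1}$, so the passage $x_j\leftrightarrow s_j$ is a triangular invertible change of generators and $\Q[x_1,x_2,\ldots]=\Q[s_1,s_2,\ldots]$ with each $s_j$ a free generator of degree $2j$. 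By Proposition \ref{Bott} this identifies $H^*(\Omega_0U(n);\Q)$ with $\Q[s_1,\ldots,s_{n-1}]$, of Poincaré series $\prod_{j=1}^{n-1}(1-t^{2j})^{-1}$.

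For the source I would work in the same variables $s_j$ and use $c_0=1$ together with $c_i=0$ for $i>n$. A short computation rewrites each relation as $h_i=(-1)^is_i+\sum_{j=i-n}^{i-1}(-1)^js_jc_{i-j}$ for $i>n$, and $h_n=(-1)^ns_n+kc_n+\sum_{j=1}^{n-1}(-1)^js_jc_{n-j}$. In each of these the highest-indexed variable is $s_i$, it occurs with unit coefficient, and no $s_{i'}$ with $i'>i$ appears. Ordering the generators by index, this exhibits $h_n,h_{n+1},h_{n+2},\ldots$ as a triangular system in which $h_i$ eliminates $s_i$; successively substituting identifies $\Q[c_1,\ldots,c_n,s_1,s_2,\ldots]/(h_n,h_{n+1},\ldots)$ with the free polynomial ring $\Q[c_1,\ldots,c_n,s_1,\ldots,s_{n-1}]$. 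Its Poincaré series $\prod_{i=1}^n(1-t^{2i})^{-1}\prod_{j=1}^{n-1}(1-t^{2j})^{-1}$ coincides with the product computed for the target, which is exactly what I need.

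The step I expect to be the main obstacle is justifying the triangular elimination rigorously, that is, verifying that the ideal $(h_n,h_{n+1},\ldots)$ is precisely the kernel of the projection onto $\Q[c_1,\ldots,c_n,s_1,\ldots,s_{n-1}]$ and not something strictly smaller. This reduces to the bookkeeping that, in the chosen ordering, the leading term of $h_i$ is a unit multiple of $s_i$ with no higher $s$-variable occurring, which in turn hinges on the truncation $c_i=0$ for $i>n$; pinning down the index ranges in \eqref{Newton} and in the definition of $h_i$ exactly is the delicate part.
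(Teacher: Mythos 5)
Your proposal is correct and takes essentially the same route as the paper: both prove that the surjection \eqref{projection} is an isomorphism by comparing rational Poincar\'e series of source and target, using the collapse of the rational Serre spectral sequence for \eqref{fibration U(n)}. The only difference is that where the paper simply asserts that $h_n,h_{n+1},\ldots$ is a regular sequence in $\Q[c_1,\ldots,c_n,x_1,x_2,\ldots]$, you justify the same Poincar\'e series count explicitly via the triangular change of variables $x_j\leftrightarrow s_j$ and elimination of $s_i$ for $i\ge n$ --- a valid argument (the substitutions lower the index and the grading makes everything finite in each degree), which in effect supplies the detail the paper leaves as ``one can easily see.''
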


\begin{proof}
  Since the rational cohomology Serre spectral sequence for the homotopy fibration \eqref{fibration U(n)} collapses at the $E_2$-term as above, it follows from Proposition \ref{Bott} that the Poincar\'e series of the rational cohomology of $B\G_k(S^2,U(n))$ is given by
  \[
    \prod_{i=1}^n\frac{1}{1-t^{2i}}\prod_{i=1}^{n-1}\frac{1}{1-t^{2i}}.
  \]
  On the other hand, one can easily see that $h_n, h_{n+1}, \ldots$ is a regular sequence in $\Q[c_1,\ldots,c_n,x_1,x_2,\ldots]$ and so $(\Z[c_1,\ldots,c_n,x_1,x_2,\ldots]/(h_n,h_{n+1},\ldots)\rat)\otimes\Q$ has the same Poincar\'e series as $B\G_k(S^2,U(n))$. Thus the map \eqref{projection} is injective, hence an isomorphism.
\end{proof}

\begin{corollary}
  \label{BU2}
  There is an isomorphism
    \begin{align*}
      H^*(B\G_k(S^2, Spin^c(3)); \Z)
      &\cong H^*(B\G_k(S^2, U(2)); \Z) \\
      &\cong \Z[c_1, c_2, x_1, x_2, \ldots]/ (h_2, h_3, \ldots)\rat.
    \end{align*}
\end{corollary}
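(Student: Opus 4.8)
The plan is to let Theorem \ref{BU} do all the real work, since this is a corollary. The second isomorphism
\[
H^*(B\G_k(S^2, U(2)); \Z) \cong \Z[c_1, c_2, x_1, x_2, \ldots]/(h_2, h_3, \ldots)\rat
\]
is exactly the special case $n = 2$ of Theorem \ref{BU}, so it requires no new argument; the polynomials $h_2, h_3, \ldots$ are precisely the ones defined before the theorem with this value of $n$. All that remains is to produce the first isomorphism relating the $Spin^c(3)$-gauge group to the $U(2)$-gauge group, and this should come purely from the exceptional isomorphism $Spin^c(3) \cong U(2)$ recorded above.

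Concretely, I would fix the Lie group isomorphism $\phi \colon Spin^c(3) \xrightarrow{\cong} U(2)$. Since $\phi$ is an isomorphism of topological groups, it induces a homotopy equivalence $B\phi \colon BSpin^c(3) \xrightarrow{\simeq} BU(2)$, and postcomposition with $B\phi$ gives a homotopy equivalence of the full mapping spaces $\map(S^2, BSpin^c(3)) \xrightarrow{\simeq} \map(S^2, BU(2))$. This equivalence permutes path components according to the bijection $\phi_* \colon [S^2, BSpin^c(3)] \to [S^2, BU(2)]$, that is, according to the isomorphism $\phi_* \colon \pi_1(Spin^c(3)) \to \pi_1(U(2))$. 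Feeding this through the Gottlieb equivalence \eqref{mapping space} on both sides then yields $B\G_k(S^2, Spin^c(3)) \simeq B\G_{\phi_*(k)}(S^2, U(2))$, and hence an isomorphism in integral cohomology; composing with the $n=2$ case of Theorem \ref{BU} gives the displayed formula.

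The only point that genuinely needs checking is the bookkeeping of the index $k$. I must confirm that, under the standard identifications $\pi_1(Spin^c(3)) \cong \Z \cong \pi_1(U(2))$ used to label bundle classes over $S^2$, the map $\phi_*$ carries generator to generator, so that the component indexed by $k$ on the $Spin^c(3)$ side really corresponds to the component indexed by the same $k$ on the $U(2)$ side (rather than to $-k$, which would replace $k$ by $-k$ in each $h_i$). Since the index of a gauge group over $S^2$ is by definition extracted from $\pi_1$ of the structure group, this reduces to choosing the orientation convention on $\pi_1$ compatibly through $\phi$. I do not expect a real obstacle here: the content of the corollary is Theorem \ref{BU} together with the exceptional isomorphism, and the remaining effort is matching indexing conventions rather than any new homotopy-theoretic input.
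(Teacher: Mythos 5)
Your proposal is correct and is essentially the paper's (implicit) argument: the paper offers no separate proof of this corollary, treating it as an immediate consequence of Theorem \ref{BU} with $n=2$ together with the exceptional isomorphism $Spin^c(3)\cong U(2)$ recorded at the start of Section \ref{Spinc}. Your bookkeeping worry about $k$ versus $-k$ is harmless: the paper indexes $Spin^c(3)$-bundles by transporting $\pi_1(U(2))\cong\Z$ through this very isomorphism, and in any case complex conjugation on $U(2)$ gives $B\G_k(S^2,U(2))\simeq B\G_{-k}(S^2,U(2))$, so the stated ring is the same either way.
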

\noindent

Next, we compute the integral cohomology of $B\G_k(S^2,Spin^c(4))$. Recall that there is a short exact sequence of Lie groups
\[
1 \to Spin^c(4) \xrightarrow{\alpha} U(2)\times U(2) \xrightarrow{\pi} S^1 \to 1
\]
where $\alpha$ is the inclusion and $\pi(A, B) = \det(AB^{-1})$. We denote the composite map $S^2 \xrightarrow{k} BSpin^c(4) \xrightarrow{\alpha} BU(2)\times BU(2) \xrightarrow{p} BU(2)$ by $\underline{k}$, where $p$ is the first projection. Since the induced map $\Omega \alpha \colon \Omega_0 Spin^c(4) \to \Omega_0 (U(2)\times U(2))$ is a homotopy equivalence, we have

\begin{lemma}
  \label{diag1}
 There is a homotopy commutative diagram
  \[
  \xymatrix{
    \Omega_0 Spin^c(4) \ar[r]^-{\simeq} \ar[d]^{\iota}
    & \Omega_0 (U(2) \times U(2)) \ar[r] \ar[d]^{\iota}
    & \ast \ar[d] \\
    B\G_k(S^2, Spin^c(4)) \ar[r]^-{\alpha_*} \ar[d]^e
    & B\G_{(\underline{k},\underline{k})}(S^2, U(2)\times U(2)) \ar[r]^-{\pi_*} \ar[d]^e
    & B\G_0(S^2, S^1) \ar@{}[d]|{\rotatebox{-90}{$\simeq$}} \\
    BSpin^c(4) \ar[r]^-{\alpha} & BU(2) \times BU(2) \ar[r]^-{\pi} & BS^1
    }
  \]
where all columns and rows are homotopy fibrations.
\end{lemma}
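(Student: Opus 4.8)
The plan is to obtain the entire $3\times 3$ diagram by applying the natural evaluation fibration $Y\mapsto(\Omega^2 Y\to\map(S^2,Y)\xrightarrow{\mathrm{ev}}Y)$ to the bottom-row fibration, and then to restrict to the relevant path components and rename everything via Gottlieb's equivalence \eqref{mapping space}. First I would record the bottom row: the short exact sequence $1\to Spin^c(4)\xrightarrow{\alpha}U(2)\times U(2)\xrightarrow{\pi}S^1\to1$ deloops to a homotopy fibration $BSpin^c(4)\xrightarrow{\alpha}BU(2)\times BU(2)\xrightarrow{\pi}BS^1$, since a short exact sequence of Lie groups induces a fibration sequence of classifying spaces.

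Next I would use that, for any space $Y$, evaluation at the basepoint $\mathrm{ev}\colon\map(S^2,Y)\to Y$ is a fibration with fiber $\map_*(S^2,Y)=\Omega^2Y$, and that this construction is natural in $Y$. Feeding the two maps $\alpha,\pi$ of the bottom row into this natural construction produces the whole diagram at once: the three columns are the evaluation fibrations, while the three rows are the images of the bottom fibration under $\Omega^2(-)$ (top), $\map(S^2,-)$ (middle) and the identity (bottom). Every square commutes by naturality of $\mathrm{ev}$. Each row is a fibration --- the bottom one by the previous step, and the top and middle ones because both $\Omega^2(-)$ and $\map(S^2,-)$ preserve fibrations, $S^2$ being a compact CW complex. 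Under $\Omega^2 BG\simeq\Omega G$ the top row becomes $\Omega Spin^c(4)\to\Omega(U(2)\times U(2))\to\Omega S^1$.

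Then I would restrict to path components and translate. Gottlieb's equivalence $\map(S^2,BG;\beta)\simeq B\G_\beta(S^2,G)$ renames the relevant components of the middle row as the asserted classifying spaces of gauge groups, and the column evaluation fibrations restrict to fibrations of the form \eqref{fibration U(n)} with fiber the identity component $\Omega_0 G$. On the right, $\Omega_0 S^1=*$ because $\pi_i(S^1)=0$ for $i\ge2$, which both turns the top-right corner into a point and yields the equivalence $B\G_0(S^2,S^1)\simeq BS^1$.

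The step I expect to be the main obstacle is the component bookkeeping: I must verify that $\alpha_*$ sends the component indexed by $k$ into the one indexed by $(\underline{k},\underline{k})$ and that $\pi_*$ sends the latter into the one indexed by $0$. This is governed by the induced maps on $\pi_2(B-)\cong\pi_1(-)$, which I would read off from the long exact homotopy sequence of the bottom fibration: $\alpha_*\colon\pi_1(Spin^c(4))\cong\Z\to\pi_1(U(2)\times U(2))\cong\Z\oplus\Z$ is the diagonal inclusion, and $\pi_*\colon\Z\oplus\Z\to\pi_1(S^1)\cong\Z$ is the difference $(a,b)\mapsto a-b$, so that $k\mapsto(k,k)\mapsto0$, exactly as indexed (here $\underline{k}$ is the first Chern-class component of $\alpha\circ k$). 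The same computation shows that $\alpha_*$ is an isomorphism on $\pi_i$ for $i\ge2$, which reconfirms that $\Omega\alpha$ is a homotopy equivalence on identity components and identifies the top row as claimed.
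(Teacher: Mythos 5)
Your proposal is correct and follows essentially the same route the paper intends: the paper states Lemma \ref{diag1} without a written proof, justifying it only by the remark that $\Omega\alpha\colon\Omega_0 Spin^c(4)\to\Omega_0(U(2)\times U(2))$ is a homotopy equivalence, and your argument (deloop the short exact sequence, apply the natural evaluation fibration $\Omega^2 Y\to\map(S^2,Y)\to Y$, restrict to components via Gottlieb's equivalence, and check the component bookkeeping on $\pi_1$) is precisely the standard construction being invoked. Your $\pi_1$-level verification that $k\mapsto(k,k)\mapsto 0$, with $\alpha_*$ the diagonal and $\pi_*$ the difference map, correctly fills in the only detail that needs care.
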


\begin{lemma}
  \label{BSpinc4}
  $H^*(BSpin^c(4);\Z)$ is a free abelian group and there is an isomorphism
  \[
    H^*(BSpin^c(4);\Z)
    \cong \Z[c_1,c_2]\otimes\Z[c_1,c_2]/(c_1 \otimes 1 - 1 \otimes c_1).
  \]
\end{lemma}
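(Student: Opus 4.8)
The plan is to realize $BSpin^c(4)$ as the total space of a circle bundle over $BU(2)\times BU(2)$ and then apply the Gysin sequence. Starting from the short exact sequence $1 \to Spin^c(4) \xrightarrow{\alpha} U(2)\times U(2) \xrightarrow{\pi} S^1 \to 1$ recalled above, applying the classifying space functor yields a homotopy fibration $BSpin^c(4) \xrightarrow{B\alpha} BU(2)\times BU(2) \xrightarrow{B\pi} BS^1$. Extending this fibration one step to the left and using $\Omega BS^1 \simeq S^1$, I obtain a principal $S^1$-bundle
\[
S^1 \to BSpin^c(4) \xrightarrow{B\alpha} BU(2)\times BU(2)
\]
whose classifying map is $B\pi$ and whose Euler class is $e = B\pi^*(t)$, where $t$ generates $H^2(BS^1;\Z)$.

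First I would identify this Euler class. Since $\pi(A,B) = \det(AB^{-1}) = \det A \cdot (\det B)^{-1}$ and the determinant $U(2)\to S^1$ induces $t \mapsto c_1$ on cohomology, the composite $B\pi$ sends $t$ to $c_1\otimes 1 - 1\otimes c_1$. Thus $e = c_1\otimes 1 - 1\otimes c_1 \in H^2(BU(2)\times BU(2);\Z) = \Z[c_1,c_2]\otimes\Z[c_1,c_2]$.

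Next I would invoke the Gysin sequence of the circle bundle,
\[
\cdots \to H^{i-2}(BU(2)^2) \xrightarrow{\cup e} H^i(BU(2)^2) \xrightarrow{B\alpha^*} H^i(BSpin^c(4)) \to H^{i-1}(BU(2)^2) \to \cdots.
\]
The base cohomology $\Z[c_1,c_2]\otimes\Z[c_1,c_2]$ is an integral domain and $e$ is a nonzero element, so multiplication by $e$ is injective. Hence the connecting maps vanish and the sequence breaks into short exact sequences $0 \to H^{i-2} \xrightarrow{\cup e} H^i \to H^i(BSpin^c(4)) \to 0$, yielding the ring isomorphism
\[
H^*(BSpin^c(4);\Z) \cong \Z[c_1,c_2]\otimes\Z[c_1,c_2]/(c_1\otimes 1 - 1\otimes c_1),
\]
since $B\alpha^*$ is a ring map realizing the quotient. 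Finally, freeness is immediate: the relation $c_1\otimes 1 = 1\otimes c_1$ lets one eliminate a single generator, so the quotient is itself a polynomial ring, and in particular a free abelian group.

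The only genuinely nontrivial point is the identification of the Euler class with $c_1\otimes 1 - 1 \otimes c_1$; once that is in hand, the non-zero-divisor property of a single linear polynomial in a polynomial ring makes the Gysin argument entirely formal. I expect that pinning down the convention in $B\pi^*(t)$ (equivalently, verifying that the determinant induces $t\mapsto c_1$ rather than a proper multiple) is the step requiring the most care, although the precise sign does not affect the isomorphism type of the quotient.
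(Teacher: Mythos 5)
Your proof is correct, but it takes a genuinely different route from the paper. The paper keeps $BSpin^c(4)$ as the \emph{fiber}: it first proves $H^{\odd}(BSpin^c(4);\Z)=0$ and freeness by running the Serre spectral sequence of the auxiliary fibration $BSU(2)\times BSU(2)\to BSpin^c(4)\to BS^1$ (coming from $1\to SU(2)\times SU(2)\to Spin^c(4)\to S^1\to 1$), and only then runs a second Serre spectral sequence for $BSpin^c(4)\xrightarrow{\alpha}BU(2)\times BU(2)\xrightarrow{\pi}BS^1$, whose collapse identifies $\alpha^*$ as the quotient by $(\pi^*(c_1))=(c_1\otimes 1-1\otimes c_1)$. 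You instead rotate that fibration once to the left, viewing $BSpin^c(4)$ as the \emph{total space} of a circle bundle $S^1\to BSpin^c(4)\to BU(2)\times BU(2)$ with Euler class $c_1\otimes 1-1\otimes c_1$, and apply the Gysin sequence; since the Euler class is a nonzero element of the integral domain $\Z[c_1,c_2]\otimes\Z[c_1,c_2]$, cup product with it is injective, the sequence splits into short exact sequences, and $B\alpha^*$ is a surjective ring map with kernel exactly the ideal it generates. This buys you both conclusions at once: the quotient visibly eliminates one polynomial generator, so it is again a polynomial ring and hence free abelian, making the paper's preliminary vanishing step unnecessary. Your identification of the Euler class is also sound (and matches the paper's assertion $\pi^*(c_1)=c_1\otimes 1-1\otimes c_1$): $\pi$ is $\det$ on the first factor times $\det^{-1}$ on the second, and $B\det$ pulls the generator of $H^2(BS^1;\Z)$ back to $c_1$; as you say, the sign is immaterial for the quotient. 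The paper's two-spectral-sequence route has the mild advantage of staying within the single uniform technique (collapsing Serre spectral sequences by odd-degree vanishing) used everywhere else in Sections \ref{Spinc}--\ref{Tor}, but your argument is shorter and more self-contained.
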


\begin{proof}
  First, we consider $H^*(BSpin^c(4);\Z)$. Consider the short exact sequence of Lie groups
  \[
  1 \to SU(2) \times SU(2) \hookrightarrow Spin^c(4) \to S^1 \to 1.
  \]
  Then there is a homotopy fibration
  \[
  BSU(2) \times BSU(2) \to BSpin^c(4) \to BS^1.
  \]
  Since
  \[
  H^{\odd}(BSU(2) \times BSU(2);\Z) = 0
  \quad \text{and} \quad H^{\odd}(BS^1;\Z) = 0,
  \]
  the integral cohomology Serre spectral sequence for the homotopy fibration collapses at the $E_2$-term by degree reasons. Thus we get $H^{\odd}(BSpin^c(4);\Z) = 0$. Since both $H^*(BSU(2) \times BSU(2);\Z)$ and $H^*(BS^1;\Z)$ are free abelian groups, $H^*(BSpin^c(4);\Z)$ is also a free abelian group.
  Next, we consider the homotopy fibration of the bottom row of the diagram in Lemma \ref{diag1}
  \[
    BSpin^c(4) \xrightarrow{\alpha} BU(2)\times BU(2) \xrightarrow{\pi} BS^1.
  \]
  Recall that $H^{\odd}(BSpin^c(4);\Z) = 0$. By degree reasons, the integral cohomology Serre spectral sequence for the homotopy fibration collapses at the $E_2$-term. Since $\pi^*(c_1) = c_1 \otimes 1 - 1 \otimes c_1$, we get the isomorphism in the statement.
\end{proof}

\begin{proposition}
  \label{BGSpinc4}
  $H^*(B\G_k(S^2, Spin^c(4));\Z)$ is a free abelian group and there is an isomorphism
  \[
    H^*(B\G_k(S^2, Spin^c(4));\Z)
    \cong
    \Z[c_1,c_2,x_1,x_2,\ldots]\otimes\Z[c_1,c_2,x_1,x_2,\ldots]/I
  \]
  where
  \[
    I=(h_2\otimes 1,1\otimes h_2,h_3\otimes 1,1\otimes h_3,\ldots)_\mathcal{R}+(c_1\otimes 1-1\otimes c_1).
  \]
\end{proposition}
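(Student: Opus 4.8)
The plan is to read off $H^*(B\G_k(S^2,Spin^c(4));\Z)$ from the homotopy commutative diagram of Lemma \ref{diag1}, using the middle row as a fibration
\[
  B\G_k(S^2,Spin^c(4))\xrightarrow{\alpha_*}B\G_{(\underline k,\underline k)}(S^2,U(2)\times U(2))\xrightarrow{\pi_*}B\G_0(S^2,S^1)\simeq BS^1,
\]
whose total space has accessible cohomology. Write $F=B\G_k(S^2,Spin^c(4))$ and $E=B\G_{(\underline k,\underline k)}(S^2,U(2)\times U(2))$. First I would observe that a gauge group over a product group splits, so $E\simeq B\G_{\underline k}(S^2,U(2))\times B\G_{\underline k}(S^2,U(2))$. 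Since each factor has free abelian, evenly graded cohomology by Corollary \ref{BU2}, the K\"unneth theorem gives $H^*(E;\Z)\cong A\otimes A$ with $A=\Z[c_1,c_2,x_1,x_2,\ldots]/(h_2,h_3,\ldots)\rat$, i.e. the presentation $\Z[c_1,c_2,x_1,\ldots]\otimes\Z[c_1,c_2,x_1,\ldots]/(h_2\otimes1,1\otimes h_2,\ldots)\rat$; that the rational closures combine into a single one is justified by the fact that $(h_2,h_3,\ldots)\rat$ is a direct summand of each polynomial factor.

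Next I would prove that $H^*(F;\Z)$ is free abelian and concentrated in even degrees, which is exactly the input needed to run the spectral sequence of the middle row. For this I use the left column fibration $\Omega_0 Spin^c(4)\xrightarrow{\iota}F\xrightarrow{e}BSpin^c(4)$. The base has free abelian, evenly graded cohomology by Lemma \ref{BSpinc4}, and the top row of the diagram identifies $\Omega_0 Spin^c(4)\simeq\Omega_0 U(2)\times\Omega_0 U(2)$, whose cohomology is free abelian and evenly graded by Proposition \ref{Bott}. Thus $E_2^{p,q}$ is nonzero only for $p,q$ both even, every differential $d_r$ vanishes by a parity count, and the free abelian $E_\infty$-terms make the extension problem trivial. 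Hence $H^*(F;\Z)$ is free abelian with $H^{\odd}(F;\Z)=0$.

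With $H^{\odd}(F;\Z)=0$ and $H^{\odd}(BS^1;\Z)=0$, the Serre spectral sequence of the middle row fibration $F\to E\to BS^1$ collapses at $E_2$ by the same parity argument. To recover the ring structure I would first identify the image of the polynomial generator $t\in H^2(BS^1)$: naturality of the lower right square of Lemma \ref{diag1}, together with $\pi^*(c_1)=c_1\otimes 1-1\otimes c_1$ from Lemma \ref{BSpinc4} and the homotopy equivalence $e\colon B\G_0(S^2,S^1)\to BS^1$, gives $\pi_*^*(t)=c_1\otimes1-1\otimes c_1\in H^2(E;\Z)$. Collapse of the spectral sequence makes the fiber edge homomorphism $\alpha_*^*\colon H^*(E;\Z)\to H^*(F;\Z)$ surjective, and since all terms are free abelian, Leray--Hirsch shows $H^*(E;\Z)$ is a free module over $H^*(BS^1;\Z)=\Z[t]$ acting through $\pi_*^*$. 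Consequently the ring map $\alpha_*^*$ is surjective with kernel the principal ideal $(\pi_*^*(t))=(c_1\otimes1-1\otimes c_1)$, so
\[
  H^*(F;\Z)\cong H^*(E;\Z)/(c_1\otimes1-1\otimes c_1),
\]
which is precisely $\Z[c_1,c_2,x_1,\ldots]\otimes\Z[c_1,c_2,x_1,\ldots]/I$ with $I$ as in the statement.

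The main obstacle is the backward direction: the middle row spectral sequence naturally computes the total space $E$ from the fiber $F$, whereas I need $F$, so the argument only closes once the parity and freeness of $H^*(F;\Z)$ have been secured independently from the left column --- which is why both fibrations of Lemma \ref{diag1} are needed. The delicate points are therefore (i) pinning down $\ker\alpha_*^*$ as exactly the principal ideal $(c_1\otimes1-1\otimes c_1)$, and not merely a subideal, for which the free $\Z[t]$-module structure (equivalently, that $c_1\otimes1-1\otimes c_1$ is a non-zero-divisor in $H^*(E;\Z)$) is essential; and (ii) the bookkeeping ensuring that the rational closure in the presentation of $H^*(E;\Z)$ descends correctly, so that the relation ideal of the quotient is $I=(h_2\otimes1,1\otimes h_2,\ldots)\rat+(c_1\otimes1-1\otimes c_1)$ and not its full rational closure.
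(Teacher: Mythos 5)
Your proof is correct and follows essentially the same route as the paper: use the left column of Lemma \ref{diag1} to show $H^*(B\G_k(S^2,Spin^c(4));\Z)$ is free abelian and concentrated in even degrees, then collapse the Serre spectral sequence of the middle row over $BS^1$ and quotient by the ideal generated by $\pi_*^*(c_1)=c_1\otimes 1-1\otimes c_1$. The points you flag as delicate (the product splitting of $B\G_{(\underline{k},\underline{k})}(S^2,U(2)\times U(2))$, the Leray--Hirsch identification of $\ker\alpha_*^*$ as exactly the principal ideal, and the rational-closure bookkeeping) are exactly the steps the paper leaves implicit, and your justifications of them are sound.
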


\begin{proof}
Recall that
\[
H^{\odd}(\Omega_0 Spin^c(4);\Z) \cong H^{\odd}(\Omega S^3 \times \Omega S^3;\Z) = 0.
\]
By Lemma \ref{BSpinc4}, $H^{\odd}(BSpin^c(4);\Z) = 0$. Then we get
\[
  H^{\odd}(B\G_k(S^2, Spin^c(4)); \Z) = 0,
\]
since the integral cohomology Serre spectral sequence for the homotopy fibration of the left column of the diagram in Lemma \ref{diag1}
\[
\Omega_0 Spin^c(4) \xrightarrow{\iota} B\G_k(S^2, Spin^c(4)) \xrightarrow{e} BSpin^c(4)
\]
collapses at the $E_2$-term by degree reasons. Since both $H^*(\Omega_0 Spin^c(4);\Z)$ and $H^*(BSpin^c(4);\Z)$ are free abelian groups, $H^*(B\G_k(S^2, Spin^c(4));\Z)$ is also a free abelian group.
Then, since $H^{\odd}(BS^1;\Z) = 0$, the integral cohomology Serre spectral sequence for the homotopy fibration of the middle row of the diagram in Lemma \ref{diag1}
\[
B\G_k(S^2, Spin^c(4)) \xrightarrow{\alpha_*} B\G_{(\underline{k},\underline{k})}(S^2, U(2)\times U(2)) \xrightarrow{\pi_*} BS^1
\]
collapses at the $E_2$-term by degree reasons. Furthermore, we have
\[
  (\pi_*)^*(c_1)
  = (\pi \circ e)^*(c_1)
  = e^* \circ \pi^*(c_1)
  = e^* (c_1 \otimes 1 - 1 \otimes c_1)
  = c_1 \otimes 1 - 1 \otimes c_1.
\]
Thus we get
\begin{align*}
  &H^*(B\G_k(S^2, Spin^c(4));\Z) \\
  &\cong H^*(B\G_{(\underline{k},\underline{k})}(S^2, U(2)\times U(2));\Z) / ((\pi_*)^*(c_1)) \\
  &\cong H^*(B\G_{k}(S^2, U(2));\Z) \otimes H^*(B\G_{k}(S^2, U(2));\Z) / (c_1 \otimes 1 - 1 \otimes c_1) \\
  &\cong \Z[c_1,c_2,x_1,x_2,\ldots]\otimes\Z[c_1,c_2,x_1,x_2,\ldots]/I
\end{align*}
\noindent
and the proof is finished.
\end{proof}
\noindent

%%%%% Section 3 %%%%%

\section[Computation of cohomology rings]{Cohomology of $B\G_1(S^2,SO(n))$ for $n=3, 4$}
  \label{Coh}

This section computes the integral cohomology ring of $B\G_1(S^2,SO(n))$ for $n=3, 4$ and proves Theorem \ref{main1}. Recall that there is an exact sequence of Lie groups
\[
1 \to S^1 \xrightarrow{\alpha} Spin^c(n) \xrightarrow{\pi} SO(n) \to 1.
\]
For an integer $k$, let $\overline{k}$ be its mod $2$ reduction. Since the induced map $\Omega \pi \colon \Omega_0 Spin^c(n) \to \Omega_0 SO(n)$ is a homotopy equivalence, we have

\begin{lemma}
  \label{diag2}
There is a homotopy commutative diagram
  \[
    \xymatrix{
      \ast \ar[r] \ar[d] & \Omega_0 Spin^c(n) \ar[r]^{\simeq} \ar[d]^{\iota} & \Omega_0 SO(n) \ar[d]^{\iota}\\
      B\G_0(S^2, S^1) \ar[r]^-{\alpha_*} \ar@{}[d]|{\rotatebox{-90}{$\simeq$}} & B\G_k(S^2, Spin^c(n)) \ar[r]^-{\pi_*} \ar[d]^{e} & B\G_{\overline{k}}(S^2, SO(n)) \ar[d]^{e} \\
      BS^1 \ar[r]^-{\alpha} & BSpin^c(n) \ar[r]^-{\pi} & BSO(n)
    }
  \]
where all columns and rows are homotopy fibrations.
\end{lemma}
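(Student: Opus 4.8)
The plan is to build the entire $3\times3$ diagram, just as in Lemma \ref{diag1}, by applying the mapping space functor $\map(S^2,-)$ to a single homotopy fibration, using the evaluation maps at the basepoint of $S^2$ for the columns and the maps induced by $\alpha$ and $\pi$ for the rows. The short exact sequence of Lie groups $1 \to S^1 \xrightarrow{\alpha} Spin^c(n) \xrightarrow{\pi} SO(n) \to 1$ becomes, after applying the classifying space functor, a homotopy fibration
\[
  BS^1 \xrightarrow{\alpha} BSpin^c(n) \xrightarrow{\pi} BSO(n)
\]
with connected, simply connected fibre $BS^1$. This is the bottom row, and I would take it as the input.

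For the columns I would use the evaluation fibration. For any topological group $G$ this is the homotopy fibration $\map_*(S^2,BG) \to \map(S^2,BG) \xrightarrow{e} BG$, and since $\map_*(S^2,BG) = \Omega^2 BG \simeq \Omega G$, passing to the component of a chosen class in $\pi_1(G) \cong [S^2,BG]$ and invoking Gottlieb's equivalence \eqref{mapping space} recovers the fibration $\Omega_0 G \xrightarrow{\iota} B\G(S^2,G) \xrightarrow{e} BG$ exactly as in \eqref{fibration U(n)}. Taking $G = S^1, Spin^c(n), SO(n)$ produces the three columns. The fibres of the columns constitute the top row: the middle and right fibres are $\Omega_0 Spin^c(n)$ and $\Omega_0 SO(n)$, linked by $\Omega\pi$, which is the homotopy equivalence recalled just before the statement, while the left fibre is $\map_*(S^2,BS^1)_0 = \Omega_0 S^1 \simeq \ast$. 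In particular the left column $\ast \to B\G_0(S^2,S^1) \xrightarrow{e} BS^1$ has contractible fibre, so $e$ is the claimed homotopy equivalence, and the top row is a homotopy fibration because $\Omega\pi$ is an equivalence.

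It remains to produce the middle row and to identify its fibre, which I expect to be the main obstacle. On path components $\pi_*$ is the map $\pi_1(Spin^c(n)) \cong \Z \to \pi_1(SO(n)) \cong \Z/2$, that is, the mod $2$ reduction; hence $\pi_*$ carries the $k$-component $B\G_k(S^2,Spin^c(n))$ into the $\overline{k}$-component $B\G_{\overline{k}}(S^2,SO(n))$. The middle and right columns form a map of fibrations covering $\pi$, and on fibres it induces $\Omega\pi$, a homotopy equivalence; by the standard fact that a map of fibrations inducing a homotopy equivalence on fibres yields a homotopy pullback square of total spaces over bases, the square with horizontal maps $\pi_*, \pi$ and vertical maps $e, e$ is a homotopy pullback. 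Consequently the homotopy fibre of $\pi_*$ is identified with the homotopy fibre of $\pi$, namely $BS^1$, which the left column identifies with $B\G_0(S^2,S^1)$, and $\alpha_*$ is the corresponding fibre inclusion; thus the middle row is a homotopy fibration. The delicate point is the bookkeeping of components and basepoints: one must check that the fibre inclusion lands in the $k$-component, sending the basepoint of $B\G_0(S^2,S^1)$ to the class of the chosen lift $k$ rather than to the trivial map, so that $\alpha_*$ is genuinely the map appearing in the diagram, and that the fibre is the single component $B\G_0(S^2,S^1)$ rather than a union of components of $\map(S^2,BS^1)$, which follows from the connectivity of $BS^1$. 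Finally, commutativity of every square is automatic from the naturality of the evaluation map and of the maps induced by $\alpha$ and $\pi$.
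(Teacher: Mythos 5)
Your proof is correct and follows essentially the same route as the paper: the paper states Lemma \ref{diag2} with no explicit proof beyond the observation that $\Omega \pi \colon \Omega_0 Spin^c(n) \to \Omega_0 SO(n)$ is a homotopy equivalence, and your argument---evaluation fibrations for the columns, the induced map of fibrations over $\pi$, and the homotopy pullback square obtained from the fibrewise equivalence---is exactly a careful elaboration of that observation. Your attention to the component bookkeeping (that for $k \ne 0$ the map $\alpha_*$ is the fibre inclusion of $\pi_*$ identified through the equivalence $B\G_0(S^2,S^1) \simeq BS^1$, rather than literal post-composition with $\alpha$) supplies precisely the detail the paper leaves implicit.
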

\noindent

First, we compute the $n=3$ case. By Lemma \ref{diag2}, there is a homotopy fibration
\[
  BS^1
  \xrightarrow{\alpha_*}B\G_k(S^2,Spin^c(3))
  \xrightarrow{\pi_*}B\G_{\overline{k}}(S^2,SO(3)).
\]
Let $c_i, x_i \in H^{2i}(B\G_k(S^2, Spin^c(3));\Z)$ be the generators as in Corollary \ref{BU2} and $c_1 \in H^2(BS^1;\Z)$ be the first universal Chern class. Recall that, in the short exact sequence of Lie groups
\[
1 \to S^1 \xrightarrow{\alpha} Spin^c(3) \cong U(2) \xrightarrow{\pi} SO(3) \to 1,
\]
$\alpha \colon S^1 \to U(2)$ is given by $\alpha(e^{i\theta}) =
\begin{pmatrix}
  e^{i\theta} & 0 \\
  0 & e^{i\theta}
\end{pmatrix}$. Then we can take the generators $c_i \in H^{2i}(BSpin^c(3);\Z)$ to satisfy ${\alpha}^*(c_1) = 2 c_1$ and ${\alpha}^*(c_2) = c_1^2$. By applying Lemma \ref{U(n) module}, the following statement holds.

\begin{lemma}
  \label{alphac}
  ${(\alpha_*)}^*(c_1) = 2 c_1$ and ${(\alpha_*)}^*(c_2) = c_1^2$.
\end{lemma}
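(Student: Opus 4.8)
The plan is to reduce the statement to the already-established behaviour of $\alpha^*$ on $H^*(BSpin^c(3);\Z) \cong H^*(BU(2);\Z)$ by naturality along the evaluation maps. Recall from Lemma \ref{U(n) module} (via Corollary \ref{BU2}) that the generators $c_i \in H^{2i}(B\G_k(S^2, Spin^c(3));\Z)$ are by construction the pullbacks $c_i = e^*(c_i)$ of the universal Chern classes under the evaluation $e \colon B\G_k(S^2, Spin^c(3)) \to BSpin^c(3)$. Thus computing $(\alpha_*)^*$ on these classes amounts to computing the composite $(\alpha_*)^* \circ e^* = (e \circ \alpha_*)^*$.

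First I would invoke the homotopy commutativity of the lower-left square of the diagram in Lemma \ref{diag2}, which asserts $e \circ \alpha_* \simeq \alpha \circ j$, where $j \colon B\G_0(S^2, S^1) \xrightarrow{\simeq} BS^1$ is the homotopy equivalence recorded as the left vertical arrow of that diagram. Passing to cohomology gives $(e \circ \alpha_*)^* = j^* \circ \alpha^*$, and hence $(\alpha_*)^*(c_i) = j^*(\alpha^*(c_i))$ for $i = 1, 2$.

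It remains to insert the known values. The relations $\alpha^*(c_1) = 2c_1$ and $\alpha^*(c_2) = c_1^2$ in $H^*(BS^1;\Z)$ were fixed just before the statement from the explicit formula $\alpha(e^{i\theta}) = \mathrm{diag}(e^{i\theta}, e^{i\theta})$: the map $B\alpha$ classifies $L \oplus L$ for the universal line bundle $L$ over $BS^1$, so its total Chern class is $(1+t)^2 = 1 + 2t + t^2$ with $t$ the generator. Using that $j^*$ is a ring isomorphism under which the generator $c_1 \in H^2(B\G_0(S^2, S^1);\Z)$ corresponds to $c_1 \in H^2(BS^1;\Z)$, I then read off $(\alpha_*)^*(c_1) = j^*(2c_1) = 2c_1$ and $(\alpha_*)^*(c_2) = j^*(c_1^2) = (j^* c_1)^2 = c_1^2$.

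The argument is formal once the generators are recognized as $e^*$-pullbacks, so I do not anticipate a real obstacle. The only point needing care is the bookkeeping around $j$: one must confirm that the class written $c_1$ on the source of $\alpha_*$ is exactly the one transported from $H^2(BS^1;\Z)$ along the equivalence $j$, so that $j^*$ commutes with squaring and no stray factor is introduced. This is precisely how the equivalence $B\G_0(S^2,S^1) \simeq BS^1$ and its $H^2$-generator are normalized in Lemma \ref{diag2}.
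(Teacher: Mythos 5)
Your proposal is correct and is essentially the paper's own argument: the paper's justification (``By applying Lemma \ref{U(n) module}, the following statement holds'') is precisely your observation that the classes $c_i$ on $B\G_k(S^2,Spin^c(3))$ are $e^*$-pullbacks, so that commutativity of the lower-left square in Lemma \ref{diag2} gives $(\alpha_*)^*(c_i) = j^*\alpha^*(c_i)$, after which one inserts the values $\alpha^*(c_1)=2c_1$, $\alpha^*(c_2)=c_1^2$ fixed just before the statement. Your extra care about normalizing the generator of $H^2$ under the equivalence $B\G_0(S^2,S^1)\simeq BS^1$ is exactly the implicit bookkeeping the paper performs when it writes the fibration with $BS^1$ as fibre.
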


\begin{proposition}
  \label{alpha}
  ${(\alpha_*)}^*(x_1) = kc_1$.
\end{proposition}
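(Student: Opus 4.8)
The plan is to compute $(\alpha_*)^*(x_1)$ by realizing $x_1$ as a slant product of a Chern class under the evaluation map and then exploiting the geometric meaning of the fibre inclusion $\alpha_*$. First, since $H^2(BS^1;\Z)=\Z c_1$, degree reasons give $(\alpha_*)^*(x_1)=m c_1$ for some integer $m$, so everything reduces to identifying $m$. Under the identification $B\G_k(S^2,Spin^c(3))\simeq\map(S^2,BU(2);k)$, Takeda's generator $x_1$ is the image of the universal class $c_2\in H^4(BU(2);\Z)$ under the slant product with the fundamental class, namely $x_1=\mathrm{ev}^*(c_2)/[S^2]$, where $\mathrm{ev}\colon S^2\times\map(S^2,BU(2);k)\to BU(2)$ is the evaluation and $[S^2]\in H_2(S^2;\Z)$; this is exactly the class whose restriction along $\iota$ is the Bott generator of $H^2(\Omega_0 U(2);\Z)$, as required by Lemma \ref{U(n) module}.

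The key geometric input is the correct description of $\alpha_*$. It is \emph{not} given by composition with $B\alpha$ (which would land in a component indexed by an even integer and would force $(\alpha_*)^*(x_1)=0$); rather, since $\alpha$ is the central circle of $Spin^c(3)\cong U(2)$ with quotient $SO(3)\cong PU(2)$, the fibration $BS^1\xrightarrow{\alpha}BSpin^c(3)\xrightarrow{\pi}BSO(3)$ is a principal $K(\Z,2)$-bundle whose action $\mu\colon BSpin^c(3)\times BS^1\to BSpin^c(3)$ classifies the tensor product $V\otimes L$ of the universal $U(2)$-bundle $V$ with the universal line bundle $L$. Fixing a classifying map $\beta\colon S^2\to BSpin^c(3)$ of the bundle in component $k$ (the basepoint of the fibre), the fibre inclusion is $(\alpha_* f)(s)=\mu(\beta(s),f(s))$ for $f\in\map(S^2,BS^1;0)=B\G_0(S^2,S^1)$. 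Hence $\mathrm{ev}\circ(1\times\alpha_*)=\mu\circ(P,Q)$, where $P(s,f)=\beta(s)$ and $Q(s,f)=f(s)$.

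Then I would compute, using $c_2(V\otimes L)=c_2(V)+c_1(V)c_1(L)+c_1(L)^2$ for a rank two bundle, that $\mu^*(c_2)=c_2+c_1 v+v^2$ with $v=c_1(L)$. Pulling back along $(P,Q)$ gives $P^*(c_2)=0$ since $H^4(S^2;\Z)=0$, $P^*(c_1)=k\,u$ with $u\in H^2(S^2;\Z)$ the generator (because $\beta^*(c_1)=ku$), and $Q^*(v)=c_1$, the generator of $H^2(B\G_0(S^2,S^1);\Z)$. Therefore
\[
(1\times\alpha_*)^*\mathrm{ev}^*(c_2)=(P,Q)^*\mu^*(c_2)=k\,(u\otimes c_1)+1\otimes c_1^2,
\]
and slanting with $[S^2]$ (which extracts the coefficient of $u$) yields, by naturality of the slant product,
\[
(\alpha_*)^*(x_1)=\bigl((1\times\alpha_*)^*\mathrm{ev}^*(c_2)\bigr)/[S^2]=k c_1 .
\]

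The main obstacle is getting the identification of $\alpha_*$ right: one must recognize it as the torsor action by a line bundle rather than a naive composition, and see that the factor $k$ enters solely through $\beta^*(c_1)=k u$, i.e. the first Chern class of the underlying bundle over $S^2$. A secondary point to verify carefully is the normalization of $x_1$: any class restricting to the Bott generator on the fibre is determined only modulo $\Z c_1=\operatorname{im}(e^*)\cap H^2$, and since $(\alpha_*)^*(c_1)=2c_1$ by Lemma \ref{alphac}, a stray summand $\lambda c_1$ in $x_1$ would shift the answer to $(k+2\lambda)c_1$. Thus one must pin down Takeda's explicit slant-product representative (equivalently, fix the normalization making $h_2=0$ hold identically) in order to conclude $m=k$ exactly, rather than merely $m\equiv k\pmod 2$, which is all that applying $(\alpha_*)^*$ to the relation $h_2=0$ by itself would yield.
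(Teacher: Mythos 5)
Your geometric description of the fibre inclusion $\alpha_*$ (as the torsor action $f\mapsto\mu(\beta,f)$ of the principal $K(\Z,2)$-fibration, not naive composition with $B\alpha$) is correct, and so is your pullback computation; but the proof breaks at the step you dismiss as ``secondary'': the identification $x_1=\mathrm{ev}^*(c_2)/[S^2]$. Under the normalization this paper actually uses --- the one for which the relations $h_i=kc_i+\sum_{j=1}^{i}(-1)^js_jc_{i-j}=0$, $i\ge n$, of Corollary \ref{BU2} hold --- that identification is false. Concretely, write the class of the bundle classified by $\mathrm{ev}$ in $K(S^2\times\map(S^2,BU(2);k))$ as $[A]+(H-1)[B]$ (K\"unneth decomposition, $H$ the degree-one line bundle pulled back from $S^2$, $[A]=e^*$ of the universal bundle, $\operatorname{rank}[B]=k$). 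A Chern-root computation gives $\mathrm{ev}^*(c_{i+1})=c_{i+1}(A)+u\,h_i$ with $x_j:=c_j(B)$, where $u$ is the generator of $H^2(S^2;\Z)$ dual to $[S^2]$; this is precisely how Takeda's relations arise (for $i+1>2$ both $\mathrm{ev}^*(c_{i+1})$ and $c_{i+1}(A)$ vanish, forcing $h_i=0$), and it shows
$\mathrm{ev}^*(c_2)/[S^2]=h_1=kc_1-x_1$, not $x_1$. Your parenthetical remark that the slant representative is ``equivalently'' the one making $h_2=0$ hold is also not right: replacing $x_1$ by $x_1+\lambda c_1$ and $x_2$ by $x_2+\lambda x_1c_1+\tfrac{\lambda^2-\lambda}{2}c_1^2$ preserves $h_2=0$ for every $\lambda$, which is exactly why $h_2$ alone only gives $m\equiv k\pmod 2$; only the full system $(h_i=0)_{i\ge 2}$ pins the normalization down. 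So, as written, your argument proves $(\alpha_*)^*(h_1)=kc_1$, not the Proposition.

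By a fortunate symmetry your answer survives the error: since $(\alpha_*)^*(c_1)=2c_1$ by Lemma \ref{alphac}, the identity $(\alpha_*)^*(kc_1-x_1)=kc_1$ is \emph{equivalent} to $(\alpha_*)^*(x_1)=kc_1$. Hence the gap is fixable: prove the identification $\mathrm{ev}^*(c_2)/[S^2]=kc_1-x_1$ (e.g.\ by the $K$-theoretic K\"unneth argument above, which is essentially a rerun of Takeda's construction), and then your evaluation of the torsor action yields a complete, genuinely geometric proof in which the coefficient $k$ visibly enters through $\beta^*(c_1)=ku$. The paper's proof is entirely different and avoids any geometric model of $x_1$: it writes $(\alpha_*)^*(x_1)=(k-l)c_1$, pulls back \emph{all} the relations $h_i=0$ along $(\alpha_*)^*$ using Lemma \ref{alphac}, and applies the Girard--Newton formula to the classes $s_p-x_1^p$ ($p$ prime) to conclude that $l$ is divisible by every prime, hence $l=0$. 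The cost of the paper's route is that it is purely arithmetic and unilluminating about where $k$ comes from; the cost of yours is the normalization bookkeeping that, in the version you wrote, is exactly the missing (and misstated) step.
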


\begin{proof}
  Since $c_1$ generates $H^2(BS^1;\Z)$, there exists an integer $l$ such that
  \[
    {(\alpha_*)}^*(x_1) = (k-l)c_1.
  \]
  By Corollary \ref{BU2}, we have
  \begin{align*}
  kc_2 - s_1 c_1 + s_2 &= 0 \\
  s_{n-2}c_2 - s_{n-1}c_1 + s_{n} &= 0 \quad \text{for} \quad n \ge 3
  \end{align*}
  \noindent
  in $H^{2i}(B\G_k(S^2,Spin^c(3));\Z)$.
  By Lemma \ref{alphac}, we have
  \[
  {(\alpha_*)}^*(c_1) = 2 c_1
  \quad \text{and} \quad {(\alpha_*)}^*(c_2) = c_1^2.
  \]
  Then we get
  \[
    {(\alpha_*)}^*(s_n) = (k-nl)c_1^n
  \]
  by induction. Recall the Girard-Newton formula
  \[
    s_n
    = (-1)^n n
    \sum_{\substack{r_1 + 2r_2 + \dots + nr_n = n \\ r_1 \geq 0, \dots, r_n \geq 0}}
    \frac{(r_1 + r_2 + \dots + r_n - 1)!}{r_1!r_2! \dots r_n!}
    \prod_{i = 1}^n (-x_i)^{r_i}.
  \]
  Then $s_p - x_1^p$ is divisible by $p$ for $p$ prime. On the other hand,
  \[
  {(\alpha_*)}^*(s_p - x_1^p) = ((k-pl)-(k-l)^p)c_1^p = (((k-l)-(k-l)^p)-(p-1)l)c_1^p
  \]
  is divisible by $p$ if and only if $l$ is divisible by $p$. Then $l$ is divisible by all primes and so we get $l=0$. Thus
  \[
    {(\alpha_*)}^*(x_1) = k c_1
  \]
  and the proof is finished.
\end{proof}
\noindent
By Proposition \ref{alpha}, we have ${(\alpha_*)}^*(x_1) = c_1$ for $x_1 \in H^{2}(B\G_1(S^2,Spin^c(3));\Z)$.
Now we consider the homotopy fibration
\[
BS^1
\xrightarrow{\alpha_*}B\G_1(S^2,Spin^c(3))
\xrightarrow{\pi_*}B\G_1(S^2,SO(3))
\]
and prove the first half of Theorem \ref{main1}.

\begin{proof}[Proof of Theorem \ref{main1} (1)]
  By the Leray-Hirsch theorem, we get an isomorphism of $H^*(B\G_1(S^2,SO(3));\Z)$-modules
  \[
  \Phi \colon H^*(BS^1;\Z) \otimes H^*(B\G_1(S^2,SO(3));\Z) \to H^*(B\G_1(S^2,Spin^c(3));\Z)
  \]
  defined by
  \[
  \Phi(\sum c_1^m \otimes b)
  := \sum x_1^m (\pi_*)^*(b)
  \]
  for $b \in H^*(B\G_1(S^2,SO(3));\Z)$. Actually, $\Phi$ is also a ring isomorphism, since
  \begin{align*}
    \Phi(c_1^{m+m'}\otimes (bb'))
    &= x_1^{m+m'} (\pi_*)^*(bb') \\
    &= (x_1^m (\pi_*)^*(b)) (x_1^{m'} (\pi_*)^*(b')) \\
    &= \Phi(c_1^m \otimes b) \Phi(c_1^{m'} \otimes b')
  \end{align*}
  for $m, m' \ge 0$ and $b, b' \in H^*(B\G_1(S^2,SO(3));\Z)$. Since $\Phi(c_1 \otimes 1) = x_1$, we have
  \[
    H^*(B\G_1(S^2,SO(3));\Z)
    \cong
    \bigl(\Z[c_1, c_2, x_1, x_2, \ldots]/(h_2, h_3, \ldots)\rat
    \bigr)/(x_1)
  \]
  and the proof for $n = 3$ is done.
\end{proof}

Next, we compute the $n=4$ case. Since the diagram in Lemma \ref{diag2} is natural with respect to the inclusions $i'_n\colon Spin^c(n) \to Spin^c(n+1)$ and $i_n\colon SO(n) \to SO(n+1)$, we have a homotopy commutative diagram
\[
  \xymatrix{
  BS^1 \ar[r]^-{\alpha_*} \ar@{=}[d] & B\G_1(S^2,Spin^c(3)) \ar[r]^-{\pi_*} \ar[d]^{(i'_3)_*} & B\G_1(S^2,SO(3)) \ar[d]^{(i_3)_*} \\
  BS^1 \ar[r]^-{\alpha_*}  & B\G_1(S^2,Spin^c(4)) \ar[r]^-{\pi_*} & B\G_1(S^2,SO(4))
  }
\]
where all rows are homotopy fibrations.

\begin{proof}[Proof of Theorem \ref{main1} (2)]
By Proposition \ref{BGSpinc4}, we have an isomorphism
\[
  H^*(B\G_k(S^2, Spin^c(4));\Z)
  \cong
  \Z[c_1,c_2,x_1,x_2,\ldots]\otimes\Z[c_1,c_2,x_1,x_2,\ldots]/I
\]
where
\[
  I=(h_2\otimes 1,1\otimes h_2,h_3\otimes 1,1\otimes h_3,\ldots)_\mathcal{R}+(c_1\otimes 1-1\otimes c_1).
\]
Since the composite $Spin^c(3) \xrightarrow{i'_3} Spin^c(4) \hookrightarrow U(2) \times U(2)$ is the diagonal map, we have
\[
  ((i'_3)_*)^*(c_i \otimes 1) = c_i
  \quad \text{and} \quad
  ((i'_3)_*)^*(x_i \otimes 1) = x_i.
\]
Thus by Proposition \ref{alpha}, we get
\[
  (\alpha_*)^*(x_1 \otimes 1)
  = (\alpha_*)^* \circ ((i'_3)_*)^*(x_1 \otimes 1)
  = (\alpha_*)^*(x_1)
  = c_1.
\]
Then by arguing as in the proof of Theorem \ref{main1} (1), we get
\[
H^*(B\G_1(S^2, SO(4));\Z)
\cong H^*(B\G_1(S^2, Spin^c(4));\Z)/(x_1 \otimes 1),
\]
and the proof for $n = 4$ is complete.
\end{proof}

\begin{corollary}
  \label{le4}
  The homology of $B\G_1(S^2,SO(n))$ is torsion free for $n\le 4$.
\end{corollary}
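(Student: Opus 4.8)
The plan is to deduce the corollary formally from the cohomology computations of Theorem \ref{main1}, by first upgrading them to the statement that $H^*(B\G_1(S^2,SO(n));\Z)$ is a free abelian group for $n=3,4$, and then passing to homology. Since every space occurring here is of finite type — by \eqref{mapping space}, $B\G_1(S^2,SO(n))$ is a path-component of $\map(S^2,BSO(n))$, a mapping space out of the finite complex $S^2$ into the finite-type space $BSO(n)$ — the universal coefficient theorem shows that $H_*(B\G_1(S^2,SO(n));\Z)$ is torsion free if and only if $H^*(B\G_1(S^2,SO(n));\Z)$ is free abelian in every degree. So it suffices to prove the latter.

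To prove freeness I would extract it from the Leray--Hirsch decompositions already built in the proofs of Theorem \ref{main1}. For $n=3$, the ring isomorphism $\Phi$ of that proof gives in particular an isomorphism of abelian groups
\[
H^*(B\G_1(S^2,Spin^c(3));\Z)\cong \Z[c_1]\otimes H^*(B\G_1(S^2,SO(3));\Z).
\]
By Corollary \ref{BU2} the left-hand group is free abelian, while $\Z[c_1]$ is free abelian on the monomials $1,c_1,c_1^2,\ldots$, so the right-hand side is a direct sum of copies of $H^*(B\G_1(S^2,SO(3));\Z)$. A direct sum of copies of an abelian group is torsion free only if the group itself is, so $H^*(B\G_1(S^2,SO(3));\Z)$ is torsion free, hence — being of finite type — free abelian. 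For $n=4$ the identical reasoning applies to the Leray--Hirsch isomorphism of the proof of Theorem \ref{main1} (2), now using that $H^*(B\G_1(S^2,Spin^c(4));\Z)$ is free abelian by Proposition \ref{BGSpinc4}.

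There is essentially no new geometric content here: the genuine work is the construction of the Leray--Hirsch splittings, which is already done. The only points requiring care are the two elementary algebraic facts invoked above — that $\Z[c_1]\otimes A$ is torsion free precisely when $A$ is, and the finite-type bookkeeping in the universal coefficient theorem that converts freeness of the cohomology in each degree into torsion-freeness of the homology. Neither is a serious obstacle, so the corollary follows.
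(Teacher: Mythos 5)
Your proof is correct and is essentially the paper's own argument: the paper likewise uses the Leray--Hirsch module isomorphism $\Phi$ from the proof of Theorem \ref{main1} to exhibit $H^*(B\G_1(S^2,SO(n));\Z)$ as a direct summand of $H^*(B\G_1(S^2,Spin^c(n));\Z)$, which is free abelian by Corollary \ref{BU2} and Proposition \ref{BGSpinc4}, and concludes freeness. The only difference is that you make explicit the finite-type bookkeeping and the universal coefficient step converting free cohomology into torsion-free homology, which the paper leaves implicit.
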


\begin{proof}
  For $n \le 4$, there is a module isomorphism
  \[
  \Phi \colon H^*(BS^1;\Z) \otimes H^*(B\G_1(S^2,SO(n));\Z) \to H^*(B\G_1(S^2,Spin^c(n));\Z)
  \]
  as seen in the proof of Theorem \ref{main1}. Then $H^*(B\G_1(S^2,SO(n));\Z)$ is a direct summand of $H^*(B\G_1(S^2,Spin^c(n));\Z)$ which is a free abelian group for $n \le 4$. Thus $H^*(B\G_1(S^2,SO(n));\Z)$ is also a free abelian group, completing the proof.
\end{proof}

%%%%% Section 4 %%%%%

\section[Torsion in the cohomology of $B\G_1(S^2,SO(n))$]{Torsion in the cohomology of $B\G_1(S^2,SO(n))$}
  \label{Tor}

This section gives a proof of Theorem \ref{main2}. We consider the mod $2$ cohomology of $B\G_1(S^2, SO(n))$. Let $i_n \colon SO(n) \to SO(n+1)$ be the inclusion.

\begin{lemma}
  \label{dim2}
  There are isomorphisms
    \begin{equation*}
      H^2(\Omega_0 SO(n); \Z/2) \cong
      \begin{cases}
        \langle y_1 \rangle & (n = 3)\\
        \langle z_1, y_1 \rangle & (n = 4)\\
        \langle z_1 \rangle & (n \geq 5)
      \end{cases}
  \end{equation*}
  \noindent
  such that $(\Omega i_3)^*(y_1) = y_1, (\Omega i_3)^*(z_1) = 0$ and $(\Omega i_n)^*(z_1) = z_1$ for $n \geq 4$.
\end{lemma}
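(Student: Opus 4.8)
The plan is to pass to the universal covers $Spin(n)$ and read everything off from low-dimensional homotopy, since the double cover $Spin(n)\to SO(n)$ induces a homotopy equivalence $\Omega Spin(n)\xrightarrow{\simeq}\Omega_0 SO(n)$, natural in $n$ (as $Spin(n)$ is $1$-connected, $\Omega Spin(n)=\Omega_0 Spin(n)$). Using $Spin(3)=S^3$ and $Spin(4)=S^3\times S^3$ this gives $\Omega_0 SO(3)\simeq\Omega S^3$ and $\Omega_0 SO(4)\simeq\Omega S^3\times\Omega S^3$; since $H^2(\Omega S^3;\Z/2)\cong\Z/2$, the stated dimensions $1$ and $2$ for $n=3,4$ follow. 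For $n\ge 5$ the space $\Omega_0 SO(n)$ is $1$-connected (because $\pi_1(\Omega_0 SO(n))=\pi_2(SO(n))=0$), so Hurewicz identifies $H_2(\Omega_0 SO(n);\Z)\cong\pi_3(SO(n))\cong\Z$, whence $H^2(\Omega_0 SO(n);\Z/2)\cong\Z/2$, with a unique nonzero class that I name $z_1$.

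Next I would compute $(\Omega i_3)^*$. The inclusion $Spin(3)\to Spin(4)$ is the diagonal $\Delta\colon S^3\to S^3\times S^3$, exactly as for the inclusion $i_3'\colon Spin^c(3)\to Spin^c(4)$ recorded in Section \ref{Spinc}, so under the equivalences above $\Omega i_3$ becomes the diagonal $\Omega S^3\to\Omega S^3\times\Omega S^3$. Writing $a,b$ for the two generators of $H^2(\Omega S^3\times\Omega S^3;\Z/2)$, the diagonal pulls both back to the generator of $H^2(\Omega S^3;\Z/2)$. I therefore set $y_1:=a$ and $z_1:=a+b$; then $(\Omega i_3)^*(y_1)=y_1$ while $(\Omega i_3)^*(z_1)=0$, so that $\ker(\Omega i_3)^*=\langle a+b\rangle=\langle z_1\rangle$. (For $n=3$, $y_1$ is simply the generator.)

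It remains to prove $(\Omega i_n)^*(z_1)=z_1$ for $n\ge 4$, for which I would loop the fibration $SO(n)\to SO(n+1)\to S^n$ to obtain $\Omega_0 SO(n)\xrightarrow{\Omega i_n}\Omega_0 SO(n+1)\to\Omega S^n$ and read off its homotopy exact sequence $\pi_4(S^n)\to\pi_3(SO(n))\to\pi_3(SO(n+1))\to\pi_3(S^n)$. For $n\ge 5$ both outer terms vanish, so $i_{n*}$ is an isomorphism on $\pi_3$, hence $(\Omega i_n)^*$ is an isomorphism of one-dimensional $\Z/2$-spaces and $(\Omega i_n)^*(z_1)=z_1$ automatically. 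The case $n=4$ is the crux: here $\pi_3(SO(4))=\Z^2$ is spanned by the classes $\rho,\lambda$ of the two $S^3$-factors (left and right quaternion multiplication), $i_{4*}$ is surjective since $\pi_3(S^4)=0$, and I must show its mod $2$ reduction has kernel exactly $\langle a+b\rangle$. The key point is that $i_4\circ\rho$ and $i_4\circ\lambda$ are conjugate in $SO(5)$ by the element acting as quaternion conjugation on $\R^4$ and as $-1$ on the last coordinate (determinant $+1$); connectivity of $SO(5)$ makes this conjugation homotopic to the identity, so $i_{4*}(\rho)=i_{4*}(\lambda)$, and surjectivity forces this common value to be a generator. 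Dually $(\Omega i_4)^*$ is injective with image $\langle a+b\rangle=\langle z_1\rangle$, giving $(\Omega i_4)^*(z_1)=z_1$.

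The main obstacle is precisely this $n=4$ step: unlike the stable range $n\ge 5$, the map $(\Omega i_4)^*$ is not an isomorphism, and one must verify that the one-dimensional subspace it cuts out of the two-dimensional $H^2(\Omega_0 SO(4);\Z/2)$ is the same line $\langle z_1\rangle$ that is killed by $(\Omega i_3)^*$. The conjugation argument identifying $i_{4*}(\rho)=i_{4*}(\lambda)$ is what forces the two lines to coincide; everything else is connectivity and Hurewicz bookkeeping.
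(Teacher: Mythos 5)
Your proof is correct, and it reaches the key fact by a genuinely different route than the paper. Both arguments start identically: replace $\Omega_0 SO(n)$ by $\Omega Spin(n)$ and use $Spin(3)\cong Sp(1)$, $Spin(4)\cong Sp(1)\times Sp(1)$, with $Spin(3)\to Spin(4)$ the diagonal. The paper then stays in classifying spaces: since $BSpin(n)$ is $3$-connected, the cohomology double suspension $\sigma^2\colon H^4(BSpin(n);\Z/2)\to H^2(\Omega Spin(n);\Z/2)$ is an isomorphism, and all three naturality statements follow from two characteristic class identities for the mod $2$ first symplectic Pontryagin class $q$: the diagonal pulls $q\times 1$ and $1\times q$ back to $q$, and the block inclusion $Sp(1)\times Sp(1)\to Sp(2)\cong Spin(5)$ pulls $q$ back to $q\times 1+1\times q$; the range $n\ge 5$ is then settled by $\Omega j_n$ being an $(n-2)$-equivalence. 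You instead work with homotopy groups: Hurewicz plus universal coefficients computes $H^2$, the fibration $SO(n)\to SO(n+1)\to S^n$ gives stability of $\pi_3$ for $n\ge 5$, and the crux $n=4$ case is your conjugation argument in $SO(5)$, which is precisely the homotopy-group shadow of the paper's Pontryagin class identity: the element of $SO(5)$ acting by quaternion conjugation on $\R^4$ and by $-1$ on the last coordinate intertwines your classes $\rho$ and $\lambda$, and inner automorphisms of a path-connected group are homotopic to the identity, forcing $i_{4*}(\rho)=i_{4*}(\lambda)$, hence (dually, after mod $2$ reduction) that the image of $(\Omega i_4)^*$ is exactly $\langle a+b\rangle=\langle z_1\rangle$. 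Each approach has its merits: the paper's is uniform and quick once the restriction formula for $q$ is granted, with all three assertions falling out of naturality of $\sigma^2$; yours is more elementary and self-contained (no characteristic classes needed), and is arguably cleaner for $n\ge 6$, where the paper's definition $z_1=\sigma^2(q)$ literally makes sense only at $n=5$ and implicitly appeals to stability of $H^4(BSpin(n);\Z/2)$, while you appeal directly to stability of $\pi_3(SO(n))$.
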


\begin{proof}
  Since there is a natural homotopy equivalence $\Omega_0 SO(n) \simeq \Omega Spin(n)$, we prove the statement for $\Omega Spin(n)$. Let $j_n \colon Spin(n) \to Spin(n+1)$ be the inclusion. There is a commutative diagram
  \[
    \xymatrix{
    Spin(3) \ar[r]^{j_3} \ar@{=}[d] & Spin(4) \ar[r]^{j_4} \ar@{=}[d] & Spin(5) \ar@{=}[d]\\
    Sp(1) \ar[r]^-{\Delta} & Sp(1) \times Sp(1) \ar[r]^-{\rho} & Sp(2)
    }
  \]
  where $\Delta$ is the diagonal map and $\rho$ is the diagonal inclusion. Then there is a induced homotopy commutative diagram of the corresponding classifying spaces. Let $q \in H^4(BSp(n); \Z/2)$ be the mod 2 reduction of the first symplectic Pontryagin class.
  Then
  \begin{equation*}
    \rho^*(q) = q \times 1 + 1 \times q
    \quad
    \text{and}
    \quad
    \Delta^*(q \times 1) = q = \Delta^*(1 \times q).
  \end{equation*}
  \noindent
  We define $y_1, z_1 \in H^2(\Omega Spin(n); \Z/2)$ by
  \begin{alignat*}{2}
    y_1 &:= \sigma^2(q) &\quad &(n = 3) \\
    z_1 &:= \sigma^2(q \times 1 + 1 \times q), \quad
    y_1 := \sigma^2(q \times 1) &\quad &(n = 4) \\
    z_1 &:= \sigma^2(q) &\quad &(n \geq 5),
  \end{alignat*}
  \noindent
  where $\sigma^2 \colon H^4(BSpin(n); \Z/2) \to H^2(\Omega Spin(n); \Z/2)$ denotes the cohomology double suspension. Since $BSpin(n)$ is $3$-connected, $\sigma^2$ is an isomorphism. Moreover, we have $(\Omega j_n) \circ \sigma^2 = \sigma^2 \circ j_n$. Thus we get $(\Omega j_3)^*(y_1) = y_1$ and $(\Omega j_3)^*(z_1) = 0$. Since $\Omega j_n$ is a $(n-2)$-equivalence, we also get $(\Omega i_n)^*(z_1) = z_1$ for $n \geq 4$, completing the proof.
\end{proof}
\noindent
Let $\{E_r(n), d_r\}$ denote the mod $2$ cohomology Serre spectral sequence for the homotopy fibration
\[
\Omega_0 SO(n) \xrightarrow{\iota} B\G_1(S^2, SO(n)) \xrightarrow{e} BSO(n).
\]
Since $BSO(n)$ is simply-connected, there is an isomorphism
\[
E_2(n) \cong H^*(BSO(n); \Z/2) \otimes H^*(\Omega_0 SO(n); \Z/2).
\]
Recall that the mod $2$ cohomology of $BSO(n)$ is given by
\[
 H^*(BSO(n); \Z/2) \cong \Z/2 [w_2, w_3, \ldots w_n]
\]
where $w_i \in H^i(BSO(n); \Z/2)$ is the universal $i$-th Stiefel-Whitney class. We also recall the classical result of Bott \cite{B58}.

\begin{lemma}
  \label{Bott2}
  Let $G$ be a connected compact Lie group. Then the integral homology of $\Omega_0 G$ is a free abelian group such that $H_{\odd}(\Omega_0 G; \Z) = 0$.
\end{lemma}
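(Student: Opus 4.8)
The plan is to reduce the statement to Bott's Morse-theoretic computation of the based loop space of a compact, simply-connected Lie group, which is the real content of \cite{B58}. First I would pass to the universal cover $p\colon\widetilde{G}\to G$. By the structure theory of compact connected Lie groups, $\widetilde{G}$ is isomorphic as a Lie group to $\R^k\times K$, where $K$ is a compact simply-connected semisimple Lie group (the product of the simply-connected compact simple factors associated to the simple summands of the derived subalgebra). Since $\widetilde{G}$ is simply connected, every based null-homotopic loop in $G$ lifts uniquely to a based loop in $\widetilde{G}$, and unique path-lifting shows that $\Omega p$ is a homotopy equivalence onto the identity component $\Omega_0 G$. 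As $\Omega\R^k$ is contractible, I obtain homotopy equivalences
\[
  \Omega_0 G \simeq \Omega\widetilde{G} \simeq \Omega(\R^k\times K) \simeq \Omega K,
\]
so it suffices to prove the statement for $K$. (This is exactly the mechanism behind the equivalence $\Omega_0 SO(n)\simeq\Omega Spin(n)$ used earlier in the excerpt.)

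Now $K$ is compact and simply connected, which is precisely the setting of Bott's theorem. Using Morse theory for the energy functional on the space of based paths in $K$, Bott shows that the critical submanifolds (closed geodesics) all have even index, so that $\Omega K$ has the homotopy type of a CW complex with cells only in even dimensions. Consequently $H_*(\Omega K;\Z)$ is a free abelian group with $H_{\odd}(\Omega K;\Z)=0$, and this conclusion transfers verbatim to $\Omega_0 G$ along the equivalences above, giving the lemma.

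The only genuine content is Bott's even-cell theorem, which I would cite rather than reprove; the reduction to the simply-connected semisimple case is formal once one observes that the abelian (torus) directions contribute a contractible loop space. The single point deserving care is the identification $\Omega_0 G\simeq\Omega\widetilde{G}$: one must check that it is exactly the \emph{identity} component of $\Omega G$, namely the null-homotopic loops, that is hit by $\Omega p$, which is immediate from the unique path-lifting property of the universal cover. Everything else is a direct transfer of Bott's conclusion along homotopy equivalences, so I do not expect any real obstacle beyond invoking \cite{B58}.
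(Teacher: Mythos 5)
Your proposal is correct, but note that the paper does not prove this lemma at all: it is presented as ``the classical result of Bott'' with only the citation to \cite{B58}, and Bott's results there are invoked for arbitrary compact connected Lie groups (the paper also cites \cite{B58} for $\Omega_0 U(n)$, which is not simply connected), so no reduction is logically required if one quotes the theorem in that generality. What you have supplied is the standard reduction to the case that Bott's Morse theory handles most cleanly: pass to the universal cover, invoke the structure theorem $\widetilde{G}\cong\R^k\times K$ (where compactness of $K$ is Weyl's theorem), discard the contractible factor $\Omega\R^k$, and cite the even-cell theorem for compact simply connected $K$. This reduction is sound --- $\Omega p$ really is an equivalence (indeed a homeomorphism) of $\Omega\widetilde{G}$ onto the component of null-homotopic loops, by unique path lifting --- and it has the virtue of being self-contained up to the simply connected case and of making explicit the mechanism $\Omega_0 SO(n)\simeq\Omega Spin(n)$ that the paper uses repeatedly. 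The one place where your wording is too quick is the Morse-theoretic step itself: for a group of rank greater than one the critical sets of the energy functional are positive-dimensional manifolds, not isolated geodesics, so ``all critical submanifolds have even index'' does not by itself yield a CW structure with only even cells; one also needs that the critical manifolds themselves have torsion-free homology concentrated in even degrees, which is part of what Bott (with Samelson) proves. Since you explicitly cite rather than reprove Bott's theorem, this is a presentational inaccuracy rather than a genuine gap.
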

\noindent
In particular, we have
\[H^{\odd}(\Omega_0 SO(n); \Z/2) = 0.
\]
Hence $E_2(n) = E_3(n)$ by degree reasons.

\begin{lemma}
  \label{ge5}
  For $n \geq 5$, there is an isomorphism
  \[
     H^{2}(B\G_1(S^2, SO(n)); \Z/2) \cong (\Z/2)^2.
  \]
\end{lemma}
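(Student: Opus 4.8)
The plan is to read off $H^2(B\G_1(S^2, SO(n)); \Z/2)$ directly from the mod $2$ Serre spectral sequence $\{E_r(n), d_r\}$ introduced above, for which we already have
\[
E_2(n) \cong H^*(BSO(n); \Z/2) \otimes H^*(\Omega_0 SO(n); \Z/2)
\]
and the collapse $E_2(n) = E_3(n)$ coming from $H^{\odd}(\Omega_0 SO(n); \Z/2) = 0$. First I would isolate the total degree $2$ part. Since $BSO(n)$ is simply connected we have $H^1(BSO(n);\Z/2) = 0$, so $E_2^{1,1}(n) = 0$, and combined with $H^1(\Omega_0 SO(n);\Z/2) = 0$ this leaves only $E_2^{2,0}(n) = \langle w_2 \rangle$ and $E_2^{0,2}(n) = H^2(\Omega_0 SO(n);\Z/2)$, the latter being $\langle z_1 \rangle$ for $n \ge 5$ by Lemma \ref{dim2}. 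So in total degree $2$ the page is two-dimensional, spanned by $w_2 \otimes 1$ and $1 \otimes z_1$. The class $w_2 \otimes 1 \in E^{2,0}$ is a permanent cycle, since no differential enters or leaves $E^{2,0}$ for degree reasons, while the only possibly nonzero differential on $1 \otimes z_1 \in E_3^{0,2}$ is
\[
d_3 \colon E_3^{0,2}(n) \longrightarrow E_3^{3,0}(n) = \langle w_3 \rangle.
\]
Everything therefore reduces to showing $d_3(1 \otimes z_1) = 0$.

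To prove $d_3(1 \otimes z_1) = 0$ I would exploit naturality with respect to the inclusion $j \colon SO(3) \hookrightarrow SO(n)$, i.e. $j = i_{n-1} \circ \cdots \circ i_3$. The induced map $j_* \colon B\G_1(S^2, SO(3)) \to B\G_1(S^2, SO(n))$ is a map of evaluation fibrations covering $Bj$ and restricting to $\Omega j$ on fibers, so it gives a morphism of spectral sequences $(j_*)^* \colon E_r(n) \to E_r(3)$ compatible with the tensor decomposition and commuting with $d_3$. By Lemma \ref{dim2} the composite $(\Omega j)^* = (\Omega i_3)^* \circ \cdots \circ (\Omega i_{n-1})^*$ fixes $z_1$ down through $SO(4)$ and then kills it, so $(j_*)^*(1 \otimes z_1) = 1 \otimes (\Omega j)^*(z_1) = 0$ in $E_3^{0,2}(3)$. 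On the other hand $(Bj)^*(w_3) = w_3$ is nonzero in $H^3(BSO(3);\Z/2)$, so $(j_*)^*$ is injective on $E_3^{3,0} = \langle w_3 \rangle$. Hence
\[
(j_*)^*\bigl(d_3(1 \otimes z_1)\bigr) = d_3\bigl((j_*)^*(1 \otimes z_1)\bigr) = 0,
\]
and injectivity forces $d_3(1 \otimes z_1) = 0$.

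With this differential vanishing, both $w_2 \otimes 1$ and $1 \otimes z_1$ survive to $E_\infty(n)$, and since $E_\infty^{1,1}(n) = 0$ the associated graded of $H^2(B\G_1(S^2, SO(n));\Z/2)$ is two-dimensional over $\Z/2$, which yields the claimed isomorphism $H^2 \cong (\Z/2)^2$. I expect the main obstacle to be precisely the vanishing $d_3(1 \otimes z_1) = 0$: a priori $z_1$ could transgress onto $w_3$, and this is exactly what happens for the class $y_1$ in the $n = 3$ spectral sequence, where $H^2(B\G_1(S^2,SO(3));\Z/2) \cong \Z/2$ (from Theorem \ref{main1}(1) via torsion-freeness) forces $d_3(y_1) = w_3$. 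The role of the naturality argument is to separate the behavior of $z_1$ from that of $y_1$, using that $z_1$ restricts to zero on $\Omega_0 SO(3)$ while $w_3$ restricts nontrivially; setting up this naturality correctly and checking the injectivity on $\langle w_3 \rangle$ is the heart of the matter.
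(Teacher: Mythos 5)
Your proposal is correct and follows essentially the same argument as the paper: both reduce the lemma to showing $d_3(1\otimes z_1)=0$ and prove this by naturality of the Serre spectral sequence with respect to the inclusion $SO(3)\hookrightarrow SO(n)$, using that $z_1$ restricts to $0$ on $\Omega_0 SO(3)$ (Lemma \ref{dim2}) while the restriction on $E_3^{3,0}=\langle w_3\otimes 1\rangle$ is injective. The only cosmetic difference is that the paper runs the naturality argument one inclusion $i_n$ at a time, whereas you compose them into a single map $j$; the content is identical.
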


\begin{proof}
  We have
  \[
  E_3^{0,2}(n) = E_2^{0,2}(n) = \langle 1 \otimes z_1 \rangle,
  \quad
  E_3^{3,0}(n) = E_2^{3,0}(n) = \langle w_3 \otimes 1 \rangle.
  \]
  Let $i_n^*\colon E_3(n+1) \to E_3(n)$ be the induced homomorphism. Then we have
  \[
  i_n^* (d_3(1 \otimes z_1)) = d_3(1 \otimes (\Omega i_n)^*(z_1))
  \quad \text{in} \quad E_3^{3,0}(n)
  \]
  for $n \geq 3$. Since $i_3^*\colon E_3^{3,0}(n+1) \to E_3^{3,0}(n)$ is an isomorphism for $n \geq 3$ and
  \[
   d_3(1 \otimes (\Omega i_3)^*(z_1)) = d_3(1 \otimes 0) = 0
   \quad \text{in} \quad E_3^{3,0}(3)
  \]
  by Lemma \ref{dim2}, we get
  \[
   d_3(1 \otimes z_1) = 0
   \quad \text{in} \quad E_3^{3,0}(n).
  \]
  Thus $d_3 \colon E_3^{0,2}(n) \to E_3^{3,0}(n)$ is the zero map and so $1 \otimes z_1$ is a permanent cycle. Since $w_2 \otimes 1 \in E_3^{2,0}(n) \cong \Z/2$ is also a permanent cycle by degree reasons, we get
  \begin{align*}
    H^{2}(B\G_1(S^2, SO(n)); \Z/2)
    &\cong E_{\infty}^{2,0}(n) \oplus E_{\infty}^{0,2}(n) \\
    &\cong E_{3}^{2,0}(n) \oplus E_{3}^{0,2}(n)
    = \langle w_2 \otimes 1, 1 \otimes z_1 \rangle
  \end{align*}
  and the proof is finished.
\end{proof}

\begin{lemma}
  \label{H2Z}
  For $n \geq 5$, there is an isomorphism
  \[
     H^{2}(B\G_1(S^2, SO(n)); \Z) \cong \Z.
  \]
\end{lemma}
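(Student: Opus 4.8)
The plan is to run the integral cohomology Serre spectral sequence $\{\widetilde{E}_r^{p,q}\}$ for the same homotopy fibration
\[
\Omega_0 SO(n) \xrightarrow{\iota} B\G_1(S^2, SO(n)) \xrightarrow{e} BSO(n)
\]
that was used mod $2$, and to read off $H^2$ from its bottom antidiagonal. First I would pin down the low-degree coefficient groups. Since $\Omega_0 SO(n)\simeq\Omega Spin(n)$ is simply connected, $H^1(\Omega_0 SO(n);\Z)=0$; and for $n\ge 5$, combining Lemma \ref{dim2} (which gives $\dim_{\Z/2} H^2(\Omega_0 SO(n);\Z/2)=1$) with Lemma \ref{Bott2} (which makes $H^*(\Omega_0 SO(n);\Z)$ free) forces $H^2(\Omega_0 SO(n);\Z)\cong\Z$, generated by an integral lift of $z_1$. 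On the base, $BSO(n)$ is simply connected with $H_2(BSO(n);\Z)\cong\pi_1(SO(n))\cong\Z/2$, so $H^1(BSO(n);\Z)=H^2(BSO(n);\Z)=0$, while $H^3(BSO(n);\Z)\cong\Z/2$ is generated by the integral Stiefel--Whitney class $\beta(w_2)$.

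Next I would locate the contributions to total degree $2$. The terms $\widetilde{E}_2^{1,1}$ and $\widetilde{E}_2^{2,0}$ vanish because $H^1(BSO(n);\Z)=H^2(BSO(n);\Z)=0$, so the only surviving cell on the antidiagonal $p+q=2$ is $\widetilde{E}_2^{0,2}\cong\Z$; in particular there are no extension problems and $H^2(B\G_1(S^2,SO(n));\Z)\cong \widetilde{E}_\infty^{0,2}$. Because $\widetilde{E}_2^{2,1}=0$ the differential $d_2$ out of $\widetilde{E}_2^{0,2}$ is zero, so $\widetilde{E}_3^{0,2}\cong\Z$, and the only possibly nontrivial differential leaving this spot is $d_3\colon \widetilde{E}_3^{0,2}\to \widetilde{E}_3^{3,0}=H^3(BSO(n);\Z)\cong\Z/2$. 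Hence $\widetilde{E}_\infty^{0,2}=\ker(d_3)$.

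Finally, I would observe that the precise value of $d_3$ is irrelevant: its target is the finite group $\Z/2$, so a homomorphism $\Z\to\Z/2$ has kernel equal to $\Z$ or $2\Z$, and in either case $\ker(d_3)\cong\Z$. Therefore $H^2(B\G_1(S^2,SO(n));\Z)\cong\Z$, as claimed. The only real input to be checked carefully is the identification $H^2(\Omega_0 SO(n);\Z)\cong\Z$ --- that is, that this group has rank one and no torsion --- which is exactly what Lemmas \ref{dim2} and \ref{Bott2} deliver; beyond that the argument is a purely formal reading of the spectral sequence, and the apparent obstacle of deciding whether $d_3$ vanishes dissolves because its target is torsion. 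For consistency one may note that this is compatible with $H^2(\,\cdot\,;\Z/2)\cong(\Z/2)^2$ from Lemma \ref{ge5} via the Bockstein sequence, the extra $\Z/2$ coming from the $2$-torsion in $H^3(B\G_1(S^2,SO(n));\Z)$.
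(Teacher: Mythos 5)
Your proof is correct and follows essentially the same route as the paper: the integral cohomology Serre spectral sequence for $\Omega_0 SO(n) \xrightarrow{\iota} B\G_1(S^2,SO(n)) \xrightarrow{e} BSO(n)$, with the same low-degree coefficient computations and $H^2$ read off as $E_\infty^{0,2}$. If anything, you are slightly more careful than the paper, which asserts $E_\infty^{0,2}\cong E_3^{0,2}$ ``by degree reasons,'' whereas you explicitly observe that the possible differential $d_3\colon E_3^{0,2}\to E_3^{3,0}\cong\Z/2$ is harmless because the kernel of any homomorphism $\Z\to\Z/2$ is isomorphic to $\Z$.
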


\begin{proof}
  Consider the integral cohomology Serre spectral sequence for the homotopy fibration
  \[
  \Omega_0 SO(n) \xrightarrow{\iota} B\G_1(S^2, SO(n)) \xrightarrow{e} BSO(n).
  \]
  Then its $E_2$-term is isomorphic to $H^*(BSO(n); \Z) \otimes H^*(\Omega_0 SO(n); \Z)$.
  Since
  \[
  H^i(BSO(n); \Z) \cong
  \begin{cases}
    \Z & (i = 0) \\
    0 & (i = 1, 2) \\
    \Z/2 & (i = 3),
  \end{cases}
  \quad
  H^i(\Omega_0 SO(n); \Z) \cong
  \begin{cases}
    \Z & (i = 0, 2) \\
    0 & (i = 1)
  \end{cases}
  \]
  for $n \ge 5$, we have
  \[
    H^{2}(B\G_1(S^2, SO(n)); \Z)
    \cong E_{\infty}^{0,2}
    \cong E_3^{0,2} \cong \Z
  \]
  by degree reasons. Thus we get the isomorphism in the statement.
\end{proof}

\begin{proof}[Proof of Theorem \ref{main2}]
  By Lemma \ref{ge5} we have
\begin{align*}
  (\Z/2)^2
  &\cong H^{2}(B\G_1(S^2, SO(n)); \Z/2) \\
  &\cong \left( H^{2}(B\G_1(S^2, SO(n)); \Z) \otimes \Z/2 \right)
    \oplus{\rm Tor}(H^3(B\G_1(S^2, SO(n)); \Z), \Z/2)
\end{align*}
\noindent
  for $n \geq 5$. Since $H^{2}(B\G_1(S^2, SO(n)); \Z) \cong \Z$ by Lemma \ref{H2Z}, we get
\[
  {\rm Tor}(H^3(B\G_1(S^2, SO(n)); \Z), \Z/2) \cong \Z/2.
\]
Thus by the universal coefficient theorem, the integral homology of $B\G_1(S^2, SO(n))$ has $2$-torsion for $n \geq 5$. Therefore by Corollary \ref{le4}, the proof is finished.
\end{proof}

We have so far considered only $2$-torsion in the homology of $B\G_1(S^2, SO(n))$. Now we consider $p$-torsion for $p$ odd prime.

\begin{proposition}
  \label{oddp}
  For $p$ odd prime, $H^{\odd}(B\G_k(S^2, SO(n)); \Z/p) = 0$.
\end{proposition}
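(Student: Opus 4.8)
The plan is to run the mod $p$ cohomology Serre spectral sequence $\{E_r(n),d_r\}$ for the evaluation fibration
\[
\Omega_0 SO(n) \xrightarrow{\iota} B\G_k(S^2, SO(n)) \xrightarrow{e} BSO(n)
\]
and to argue purely by parity that it collapses with everything sitting in even total degree. Since $BSO(n)$ is simply-connected, the $E_2$-term is $H^*(BSO(n);\Z/p)\otimes H^*(\Omega_0 SO(n);\Z/p)$, so it suffices to show that each tensor factor vanishes in odd degrees.

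First I would dispose of the fiber. By Lemma \ref{Bott2}, Bott's theorem gives $H_{\odd}(\Omega_0 SO(n);\Z)=0$ with torsion-free integral homology, whence $H^{\odd}(\Omega_0 SO(n);\Z/p)=0$ for every prime $p$. For the base I would invoke the standard fact that $H^*(BSO(n);\Z)$ has only $2$-torsion; thus for $p$ odd its localization at $p$ is torsion-free and coincides with the rational cohomology, which is a polynomial algebra on the Pontryagin classes (together with the Euler class when $n$ is even) and hence concentrated in even degrees. The universal coefficient theorem then yields $H^{\odd}(BSO(n);\Z/p)=0$.

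Combining these, $E_2^{s,t}(n)=0$ whenever $s+t$ is odd. Because each differential $d_r$ raises total degree by exactly one, it carries a group in even total degree to one in odd total degree, which is zero; hence every differential vanishes and the spectral sequence collapses at the $E_2$-term. Passing to the associated graded gives
\[
H^{\odd}(B\G_k(S^2,SO(n));\Z/p)\cong\bigoplus_{s+t\ \mathrm{odd}}E_\infty^{s,t}(n)=0,
\]
which is the assertion.

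I expect no real obstacle here: the entire argument is a parity count, and the only input beyond Bott's theorem (already recorded as Lemma \ref{Bott2}) is the absence of odd torsion in $H^*(BSO(n);\Z)$. The contrast with $p=2$ is instructive — there $H^*(BSO(n);\Z/2)$ carries the odd Stiefel--Whitney classes $w_3,w_5,\ldots$ and the spectral sequence genuinely supports nontrivial higher differentials (as exploited in Lemma \ref{ge5}) — which is exactly why this clean vanishing is a feature of odd primes only.
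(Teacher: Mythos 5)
Your proof is correct and takes essentially the same approach as the paper: the mod $p$ Serre spectral sequence for the evaluation fibration $\Omega_0 SO(n) \to B\G_k(S^2,SO(n)) \to BSO(n)$, with $H^{\odd}$ of both fiber (via Lemma \ref{Bott2}) and base vanishing mod $p$, forcing collapse at $E_2$ by parity. The only difference is cosmetic: you spell out the justification that $H^{\odd}(BSO(n);\Z/p)=0$ for odd $p$, which the paper simply cites as well known.
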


\begin{proof}
  Consider the mod $p$ cohomology Serre spectral sequence for the homotopy fibration
  \[
  \Omega_0 SO(n) \xrightarrow{\iota} B\G_1(S^2, SO(n)) \xrightarrow{e} BSO(n).
  \]
  Then its $E_2$-term is isomorphic to $H^*(BSO(n); \Z/p) \otimes H^*(\Omega_0 SO(n); \Z/p)$. By Lemma \ref{Bott2} we have $H^{\odd}(\Omega_0 SO(n); \Z/p) = 0$. On the other hand, it is well known that $H^{\odd}(BSO(n); \Z/p) = 0$. Then by degree reasons, the spectral sequence collapses at the $E_2$-term and the proof is done.
\end{proof}
\noindent
Therefore by applying the universal coefficient theorem, the following statement holds.

\begin{corollary}
  For $p$ odd prime, the homology of $B\G_k(S^2, SO(n))$ has no $p$-torsion.
\end{corollary}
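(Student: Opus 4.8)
The plan is to read the corollary off Proposition \ref{oddp} directly, with no further spectral sequence computation, using the universal coefficient theorem exactly as the closing sentence of the excerpt suggests. Write $X = B\G_k(S^2, SO(n))$ for brevity. The homology of $X$ is of finite type in each degree: this follows from the Serre spectral sequence for $\Omega_0 SO(n) \to X \to BSO(n)$, since $H_*(BSO(n);\Z)$ is of finite type and $H_*(\Omega_0 SO(n);\Z)$ is finitely generated in each degree by Lemma \ref{Bott2}. Hence the split universal coefficient sequence
\[
  H^m(X; \Z/p) \cong {\rm Hom}(H_m(X;\Z), \Z/p) \oplus {\rm Ext}(H_{m-1}(X;\Z), \Z/p)
\]
is available in every degree $m$.

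I would argue by contraposition. Suppose $H_m(X;\Z)$ contained $p$-torsion for some $m$; being finitely generated, it would then split off a summand $\Z/p^r$ with $r \ge 1$. Such a summand contributes a nonzero term ${\rm Hom}(\Z/p^r, \Z/p) \cong \Z/p$ to $H^m(X;\Z/p)$ and a nonzero term ${\rm Ext}(\Z/p^r, \Z/p) \cong \Z/p$ to $H^{m+1}(X;\Z/p)$. The decisive observation is the parity bookkeeping: if $m$ is odd, the ${\rm Hom}$-term forces $H^m(X;\Z/p) \ne 0$, while if $m$ is even, the ${\rm Ext}$-term forces $H^{m+1}(X;\Z/p) \ne 0$. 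In either case one obtains nonzero mod $p$ cohomology in an odd degree, contradicting Proposition \ref{oddp}. Therefore $H_*(X;\Z)$ carries no $p$-torsion.

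I do not anticipate any genuine obstacle here; the entire content is the parity remark, namely that $p$-torsion sitting in a single group $H_m$ necessarily surfaces in odd-degree mod $p$ cohomology (in degree $m$ through ${\rm Hom}$ and in degree $m+1$ through ${\rm Ext}$), so that whatever the parity of $m$, one of the two contributions lands in an odd degree and is detected by Proposition \ref{oddp}. The only preliminary point worth stating explicitly is the finite-type condition, which guarantees that the universal coefficient sequence splits and that a $p$-divisible finitely generated group is $p$-torsion free; once these are in place the corollary is immediate.
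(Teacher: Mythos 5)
Your proof is correct and takes essentially the same approach as the paper: the paper deduces the corollary from Proposition \ref{oddp} in one line (``by applying the universal coefficient theorem''), and your parity bookkeeping --- $p$-torsion in $H_m$ surfacing in odd-degree mod $p$ cohomology via ${\rm Hom}$ when $m$ is odd and via ${\rm Ext}$ when $m$ is even --- is exactly the detail that sentence compresses. The finite-type check you include is a correct and harmless elaboration.
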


%%%%% Section 5 %%%%%

\section[Homotopy types of $\G_k(\Sigma_g,SO(n))$]{Homotopy types of $\G_k(\Sigma_g,SO(n))$}
 \label{Sigmag}

This section proves Theorem \ref{main3}. We recall the result of Theriault \cite{Th10A}.
\begin{proposition}
  There is a homotopy decomposition
  \[
    \G_k(\Sigma_g, U(n))\simeq (\Omega U(n))^{2g} \times \G_k(S^2, U(n)).
  \]
\end{proposition}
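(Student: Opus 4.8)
The plan is to build the decomposition by hand and then recognise it through a comparison of two homotopy fibrations. First I would fix the standard CW structure on $\Sigma_g$, giving a homotopy cofibration
\[
\bigvee^{2g} S^1 \xrightarrow{\ i\ } \Sigma_g \xrightarrow{\ q\ } S^2 ,
\]
where $i$ is the inclusion of the $1$-skeleton and $q$ the pinch map collapsing it. Regarding the gauge group as the space of sections of the adjoint bundle $\mathrm{Ad}\,P$, and using that $\mathrm{Ad}\,P$ is trivial over the $1$-skeleton (every principal $U(n)$-bundle over a wedge of circles is trivial), restriction to the $1$-skeleton gives a homotopy fibration
\[
\G_k^*(S^2, U(n)) \longrightarrow \G_k(\Sigma_g, U(n)) \xrightarrow{\ r\ } \map\Big(\bigvee^{2g} S^1, U(n)\Big),
\]
whose fibre is the based gauge group over $S^2$, since sections trivial on the collapsed $1$-skeleton descend to based sections over $S^2 = \Sigma_g/\bigvee^{2g} S^1$. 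As $U(n)$ is a topological group its free loop space splits, $L\,U(n)\simeq U(n)\times\Omega U(n)$, so the base, being the $2g$-fold fibre product of $L\,U(n)$ over the evaluation to $U(n)$, is identified with $U(n)\times(\Omega U(n))^{2g}$; this is the source of the $2g$ copies of $\Omega U(n)$.

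Next I would produce a map out of the claimed product. Since $q$ has degree one, the class-$k$ bundle over $S^2$ pulls back to the class-$k$ bundle over $\Sigma_g$, yielding a continuous homomorphism $q^{\#}\colon \G_k(S^2,U(n))\to\G_k(\Sigma_g,U(n))$. I would also construct a lift $L\colon(\Omega U(n))^{2g}\to\G_k(\Sigma_g,U(n))$ of the inclusion $(\Omega U(n))^{2g}\hookrightarrow U(n)\times(\Omega U(n))^{2g}$ along $r$, and set
\[
\Theta(\underline\ell,\phi)=L(\underline\ell)\cdot q^{\#}(\phi),
\]
using the group multiplication of $\G_k(\Sigma_g,U(n))$. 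Because $r$ is a homomorphism, $r\circ q^{\#}(\phi)$ is the constant loop at the evaluation $\mathrm{ev}(\phi)\in U(n)$ while $r\circ L(\underline\ell)=(1,\underline\ell)$, so $r\circ\Theta(\underline\ell,\phi)=(\mathrm{ev}(\phi),\underline\ell)$. Thus $r\circ\Theta$ realises the left-hand side as a homotopy fibration over $U(n)\times(\Omega U(n))^{2g}$ with the same fibre $\G_k^*(S^2,U(n))$, and $\Theta$ is a morphism of fibrations covering the identity of the base. On fibres $\Theta$ restricts to the pullback of based sections along the degree-one collapse $q$, which is a homotopy equivalence of $\G_k^*(S^2,U(n))$ with itself; the comparison theorem for fibrations then forces $\Theta$ to be a homotopy equivalence, giving the decomposition.

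The hard part will be the construction of the lift $L$ and the proof that it is continuous in the loop parameter. A point of $(\Omega U(n))^{2g}$ is a based-loop-valued gauge transformation on the $1$-skeleton, and lifting it to $\Sigma_g$ amounts to extending it over the attached $2$-cell, whose attaching map is the product of commutators $\prod_{i=1}^{g}[a_i,b_i]$. The primary obstruction lies in $\pi_1(U(n))\cong\Z$ and vanishes, because the class of the commutator loop $\prod_i[\ell_{a_i},\ell_{b_i}]$ is a sum of commutators in the abelian group $\pi_1(U(n))$, hence zero; the remaining difficulty is to perform this extension \emph{coherently} in the loop parameter, which I expect to be the technical crux and where the group structures of $U(n)$ and of the gauge group should be used to make the extension canonical. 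I would finally remark that nothing in this scheme is special to $U(n)$, so the same argument yields the analogous decomposition for any connected compact Lie group, as needed in the sequel.
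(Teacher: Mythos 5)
Your overall architecture --- the restriction-to-the-$1$-skeleton fibration, the pullback homomorphism $q^{\#}$, the multiplication map $\Theta$, and a fiberwise comparison --- is essentially the architecture of Theriault's argument in \cite{Th10A}, which the paper quotes without reproving. But there is a genuine gap exactly at the step you flag as the crux: the existence of the lift $L\colon(\Omega U(n))^{2g}\to\G_k(\Sigma_g,U(n))$. This lift is not a technical refinement of your commutator observation; it is equivalent to the theorem itself. Restricting $r$ to based gauge transformations gives a fibration $\G_k^*(S^2,U(n))\to\G_k^*(\Sigma_g,U(n))\to(\Omega U(n))^{2g}$, and $L$ is precisely a section of it, i.e.\ the splitting one is trying to prove. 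The vanishing of the class of $\prod_i[\ell_{a_i},\ell_{b_i}]$ in $\pi_1(U(n))$ only shows that each single point of $(\Omega U(n))^{2g}$ lifts; a section is a lift of the identity map of the whole space, and the obstructions to it lie in $H^{s+1}\bigl((\Omega U(n))^{2g};\pi_s(\G_k^*(S^2,U(n)))\bigr)$, where the coefficients are essentially $\pi_{s+2}(U(n))$ and the base has enormous cohomology, so nothing vanishes for degree or coefficient reasons. Nor can the group structure by itself make the choices canonical: the set of extensions of a fixed boundary value has no distinguished point, and $\underline{\ell}\mapsto\prod_i[\ell_{a_i},\ell_{b_i}]$ (a concatenation product) is not a homomorphism for the pointwise product, so multiplicativity does not propagate choices of extensions.

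The missing idea --- and the key step in \cite{Th10A} --- is to trade the parametrized extension problem for a single null homotopy. The cofibration $\bigvee^{2g}S^1\xrightarrow{i}\Sigma_g\xrightarrow{q}S^2$ continues as $S^2\xrightarrow{\partial}\bigvee^{2g}S^2$ with $\partial=\Sigma\bigl(\prod_i[a_i,b_i]\bigr)$, and $\partial$ is null homotopic because the suspension homomorphism $\pi_1(\bigvee^{2g}S^1)\to\pi_2(\bigvee^{2g}S^2)$ factors through the abelianization, killing the product of commutators. Now apply $\map(-,BU(n))$: using $B\G_k^*(X,U(n))\simeq\map(X,BU(n);k)$ for based maps, your based restriction fibration is, up to homotopy, the loop of the fibration of based mapping spaces induced by the cofibration, whose connecting map is $\partial^*$ and hence null homotopic; and a looped fibration whose connecting map is null homotopic splits, since the homotopy fiber of a null map $X\to Y$ is $X\times\Omega Y$. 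This produces $L$ coherently in $\underline{\ell}$ in one stroke --- your pointwise $\pi_1$ computation is just the shadow of this null homotopy on path components. With $L$ in hand, your map $\Theta$ and the comparison of fibrations do finish the proof (with the minor caveat that the base $U(n)\times(\Omega U(n))^{2g}$ is disconnected, so fibers must be compared over each path component, which the translation by the group structure handles), and this is also how the proof in \cite{Th10A} concludes.
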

\noindent
One can easily see that the proof in \cite{Th10A} is also applied to $\G_k(\Sigma_g, SO(n))$. Thus we get a homotopy decomposition
\begin{equation}
  \label{decomp}
  \G_k(\Sigma_g, SO(n))
  \simeq (\Z/2)^{2g} \times (\Omega Spin(n))^{2g} \times \G_k(S^2,SO(n)).
\end{equation}

\begin{proof}[Proof of Theorem \ref{main3}]
  Recall that $\Omega Spin(n)$ is simply-connected. By \eqref{decomp}, we have
  \begin{align*}
    \pi_1(\G_k(\Sigma_g, SO(n)))
    &\cong \pi_1((\Z/2)^{2g}) \times \pi_1((\Omega Spin(n))^{2g}) \times \pi_1(\G_k(S^2,SO(n))) \\
    &\cong \pi_1(\G_k(S^2,SO(n))).
  \end{align*}
  Since $\G_k(S^2, SO(n))$ is connected,
  \[
    \pi_1(\G_k(S^2,SO(n)))
    \cong \pi_2(B\G_k(S^2,SO(n)))
    \cong H_2(B\G_k(S^2,SO(n));\Z)
  \]
  by the Hurewicz theorem. Thus it is sufficient to prove
  \[
  H_2(B\G_k(S^2,SO(n));\Z)
  \cong
  \begin{cases}
    \Z^{n-2} \oplus \Z/2 &(k = 0) \\
    \Z^{n-2} &(k = 1).
  \end{cases}
  \]
  The $k = 1$ case follows from Theorem \ref{main1} and the universal coefficient theorem. Let $\{ E'_r(n), d'_r \}$ denote the integral cohomology Serre spectral sequence for the homotopy fibration of the middle row of the diagram in Lemma \ref{diag2}
  \[
  BS^1\xrightarrow{\alpha_*}B\G_0(S^2,Spin^c(n))\xrightarrow{\pi_*}B\G_0(S^2,SO(n)).
  \]
  Since $BSO(n)$ is simply-connected, there is an isomorphism
  \[
  E'_2(n) \cong H^*(B\G_0(S^2,SO(n));\Z) \otimes H^*(BS^1;\Z)
  \]
  Then, since $H^{\odd}(BS^1;\Z) = 0$, $E'_2(n) = E'_3(n)$ by degree reasons. Let $c_i, x_i \in H^{2i}(B\G_0(S^2,Spin^c(3));\Z)$ be the generators as in Corollary \ref{BU2} and $c_1 \in H^2(BS^1;\Z)$ be the first Chern class. For $n = 3$, we have
  \[
    (\alpha_*)^*(x_1) = 0
  \]
  by Proposition \ref{alpha}. Since $(\alpha_*)^*(c_1) = 2 c_1$ by Lemma \ref{alphac}, there is an element $x \in H^2(B\G_0(S^2, SO(3));\Z)$ such that
  \[
  (\pi_*)^*(x) = x_1
  \]
  and there is an element $w \in H^3(B\G_0(S^2, SO(3));\Z)$ of order $2$ such that
  \[
  d_3(1 \otimes c_1) = w \otimes 1.
  \]
  By degree reasons, we have
  \[
  H^i(B\G_0(S^2, SO(3)); \Z)
  \cong
  \begin{cases}
    \langle x \rangle \cong \Z & (i = 2) \\
    \langle w \rangle \cong \Z/2 & (i = 3).
  \end{cases}
  \]
  Thus by the universal coefficient theorem, we get
  \[
  H_2(B\G_0(S^2, SO(3)); \Z) \cong \Z \oplus \Z/2.
  \]
  For the $n = 4$ case, we get
  \[
  H^i(B\G_0(S^2, SO(4)); \Z)
  \cong
  \begin{cases}
    \Z^2 & (i = 2) \\
    \Z/2 & (i = 3)
  \end{cases}
  \]
  by arguing as in the $n = 3$ case. Thus
  \[
  H_2(B\G_0(S^2, SO(4)); \Z) \cong \Z^2 \oplus \Z/2
  \]
  and the proof is complete.
\end{proof}

\end{document}